\documentclass[a4paper,12pt,english]{article}
\NeedsTeXFormat{LaTeX2e}
\usepackage[cp1251]{inputenc}
\usepackage[T2A]{fontenc}
\usepackage[english]{babel}
\usepackage[dvips]{graphicx}
%
%

\usepackage{amsmath}
\usepackage{amssymb}
\usepackage{amsxtra}
\usepackage{amsthm,amscd}
\usepackage{tikz-cd}

\usepackage{geometry}
\geometry{top=2cm}
\geometry{bottom=25mm}
\geometry{left=3cm}
\geometry{right=2cm}

\newcommand{\oq}{\omega}
\newcommand{\aq}{\alpha}
\newcommand{\bq}{\beta}
\newcommand{\cq}{\gamma}

\def\dl{\delta}
\def\vt{\vartheta}
\def\vk{\varkappa}
\def\sg{\sigma}
\def\mL{\mathfrak{L}}
\def\mE{\mathcal{E}}
\def\bi{\mathbf{i}}
\def\be{\mathbf{e}}
\newcommand{\el}[1]{[#1]}
\def\mB{\mathfrak{B}}
\def\ou{\overline{u}}
\def\ma{\mathfrak{a}}

\newcommand{\ee}{u}
\newcommand{\prt}{\partial}

\newcommand{\bb}[1]{\langle #1 \rangle}

\newcommand {\bR}{\mathbf{R}}
\def\mA{\mathfrak{A}}
\def\mX{\mathfrak{X}}
\def\mf{\mathfrak{f}}
\def\mR{\mathfrak{R}}
\def\mG{\mathfrak{g}}
\def\bF{\mathbf{F}}

\def\Tor{\mathbf{T}}
\def\vp{\varphi}
\def\ve{\varepsilon}
\def\mod{\mathop{\rm mod}\nolimits}

\newcommand{\id}[1]{\mathop{\rm id}\nolimits_{#1}}

\newcommand{\ls}{\leqslant}

\newtheorem{theorem}{Theorem}


\begin{document}

\title{Reduction in mechanical systems with symmetry\footnote{Submitted on January 9, 1974.}}

\author{M.P.\,Kharlamov\footnote{Moscow State University.}}

\date{}

\maketitle

\begin{center}
{\bf \textit{Mekh. Tverd. Tela} (Russian Journal ``Mechanics of Rigid Body''),\\
1976, No. 8, pp. 4--18}

\vspace{5mm}


\vspace{2mm}


\end{center}

\vspace{3mm}

\section*{Introduction}

The first part of the article is, in fact, the classical Routh method delivered in the language of contemporary theory of Lagrangian systems. But the Routh method deals only with concrete equations and, therefore, can be applied only in the case when the configuration spaces of the initial and the reduced systems are manifolds diffeomorphic to open domains in the Euclidean space. The approach described below gives a possibility to find the structure of these manifolds in the general case and also to reveal some properties of the reduced system, first of all, the existence for this system of a global Lagrange function. The notion of a mechanical system used here was introduced in \cite{Smale}, where the necessary properties of such systems can be found.

As an example of the application of the described method we present the global reduction in the problem of the motion of a rigid body having a fixed point in a potential force field with a symmetry axis; this axis is fixed in the inertial space and drawn through the fixed point of the body. We present the complete proof of the theorem formulated by G.V.\,Kolosov \cite{Kolosov} on the equivalence of the reduced system in this case to the problem of the motion of a material point over an ellipsoid and also some corollaries of this theorem based on the results of \cite{Lust}.

\section{General theory}
We consider a mechanical system with symmetry $(M,K,V_0,G)$, where $M$ is a manifold (the configuration space of the system), $K$ a Riemann metric on $M$, $V_0$ a function on $M$, and $G$ a Lie group acting on $M$ and preserving $V_0$. All objects are supposed to be  $C^\infty$-smooth. The action of $G$ on $M$ is extended to $TM$ with tangent maps. The resulting group of diffeomorphisms is denoted by $G_T=\{Tg: g\in G\}$. The metric $K$ is supposed invariant under the action of $G_T$.

In what follows we deal only with the case of commutative $G$ isomorphic to $\bR^k{\times}\Tor^\ell$ with the natural Lie group structure. In addition we suppose that there exists a principle bundle $(M,G,S)$, which means that (see e.g. \cite{Bishop})

1) the action of $G$ is free (the only element having fixed points is the unit);

2) the manifold $S$ is a factor manifold of $M$ with respect to the action of $G$, the projection $\pi: M \to S$ is $C^\infty$-smooth;

3) $M$ is locally trivial, i.e., for each point $s\in S$ there exist a neighborhood $U$ of $s$ and a $C^\infty$-map $F_U:\pi^{-1}(U) \to G$ such that $F_U$ commutes with any $g\in G$ (supposing $G$ acts on itself by means of translations) and the map $\pi^{-1}(U) \to U{\times}G$ defined as $m\mapsto (\pi(m),F_U(m))$ is a diffeomorphism.

Let us define a \textbf{standard chart} on $S$ as a chart $(U,\vp)$ satisfying the following conditions:

(i) $\vp$ is a homeomorphism of $U$ onto the open disk $D^n$ of the Euclidean space;

(ii) for the domain $U$ there exists the above described map $F_U$.

All charts considered below will be supposed standard without loss of generality. The domain of a standard chart will be called a standard subset of the manifold $S$. Our goal is to construct on $TS$ such a dynamical system the trajectories of which together with initial points in $M$ uniquely define the corresponding trajectories of the initial system.

As usual, for an arbitrary manifold $\mathfrak{M}$ we call the charts on $T\mathfrak{M}$ and $T^*\mathfrak{M}$ of the type $(TW,T\theta)$ and $(T^*W,T^*\theta)$ obtained from some chart $(W,\theta)$ on $\mathfrak{M}$ the \textbf{natural charts} and the coordinates in a natural chart the \textbf{natural coordinates}.

Let us introduce coordinates on $M$ of the special type. First, we identify $G$ with $\bR^k{\times}\Tor^\ell$; the elements of $\bR^k{\times}\Tor^\ell$ are defined by $(x,\psi)$, where $x=(x^1,\ldots,x^k)$, $\psi=(\psi^1 \mod 2\pi, \ldots, \psi^\ell\mod 2\pi)$, and the group operation is the sum in each coordinate. Let $(U,\vp)$ be a chart on $S$. Define $u: \pi^{-1}(U) \to D^n{\times}\bR^k{\times}\Tor^\ell$ as $u(m)=(\vp(\pi(m)),F_U(m))$. Thus, for each domain $U$ of a chart on $S$ we obtain the coordinates $u(m)=(q,x,\psi)$ on the open subset $\pi^{-1}(U) \subset M$; here $q=(q^1,\ldots,q^n)\in D^n$. These coordinates will be called \textbf{special}. Let us denote the corresponding natural coordinates on $TM$ and $T^*M$ by $(q,x,\psi,\dot q, \dot x, \dot \psi)$ and $(q,x,\psi, p,y,\zeta)$ respectively. Here, the same as above, for the sake of brevity we put $p=(p^1,\ldots,p^n)$, $y=(y^1,\ldots,y^k)$, $\zeta=(\zeta^1,\ldots,\zeta^\ell)$.

For all $g\in G$ the following commutative diagrams hold
$$
\begin{tikzcd}
M \arrow{rr}{g}\arrow{rd}{\pi} & & M \arrow{ld}{\pi}\\
& S &
\end{tikzcd} \qquad
\begin{tikzcd}
TM \arrow{rr}{Tg}\arrow{rd}{T\pi} & & TM \arrow{ld}{T\pi}\\
& TS &
\end{tikzcd}
$$

The first integral of the moment of the quantity of motion corresponds to the symmetry group $G$. It is called the momentum integral \cite{Smale} $J:TM\to \mG^*$. Here $\mG$ is the Lie algebra of $G$ and $\mG^*$ is the dual space to $\mG$; both $\mG$ and $\mG^*$ are $(k+\ell)$-dimensional vector spaces. Since $G$ is commutative, its adjoint action on $\mG^*$ dual to the adjoint action on $\mG$ is trivial; the stationary subgroup of any point $\mf \in \mG^*$ coincides with the whole $G$. Then $J_\mf=J^{-1}(\mf)$ is invariant under the action of $G_T$ (see \cite{Smale}, Corollary 4.5).

Let us consider only the trajectories of the initial system with a fixed value $\mf$ of the integral $J$. Putting $\rho=T\pi|{J_\mf}$ we have the commutative diagram for all $g\in G$
$$
\begin{tikzcd}
J_\mf \arrow{rr}{Tg}\arrow{rd}{\rho} & & J_\mf \arrow{ld}{\rho}\\
& TS &
\end{tikzcd}
$$

Let us describe $J$ in special coordinates. By definition, $J(m,v)=\aq_m^*(K^*(m,v))$, where $K^*:TM\to T^*M$ is the bundle isomorphism defined by the Riemann metric $K$, $K^*(m,v_1)(m,v_2)=K_m(v_1,v_2)$, and the map $\aq_m: \mG \to T_m M$ assigns to each $X\in \mG$ the representative at the point $m$ of the vector field generated by the one-parameter subgroup of $G$ corresponding to $X$. The map $\aq_m^*: T_m^* M\to \mG^*$ is dual to $\aq_m$.

Let us fix a special coordinate system on $M$. Suppose that $X_1,\ldots,X_k,\Psi_1,\ldots,\Psi_\ell$ is the basis in $\mG$ such that each vector $\ee ^i X_i+\mu^j \Psi_j$ generates a one-parameter subgroup $g(t)$ of $G$ acting on $M$ in the chosen coordinate system as $g(t)(q,x,\psi)=(q,x+\ee  t, \psi+\mu t)$, where $\ee =(\ee ^1,\ldots,\ee ^k)$, $\mu=(\mu^1,\ldots,\mu^\ell)$. Then for $m=(q,x,\psi)$,
$$
\aq_m(\ee,\mu)=\ee ^i \frac{\prt}{\prt x^i}+\mu^j \frac{\prt}{\prt \psi^j}, \qquad \aq_m^*(p,y,\zeta)=y_id\ee ^i+\zeta_j d \mu^j.
$$
The isomorphism $K^*|{T_m M}$ is given by the matrix $||K_{ij}||$ ($i,j=1,\ldots,n+k+\ell$) of the quadratic form $K_m(v,v)$. Obviously, $K^*(q,x,\psi,\dot q,\dot x,\dot \psi)=(q,x,\psi,K_{\dot q},K_{\dot x},K_{\dot \psi})$, where, for instance, $K_{\dot q}=(K_{\dot q^1},\ldots, K_{\dot q^n})$, $K_{\dot q^\bq}$ is the partial derivative with respect to ${\dot q}^\bq$ of the function $K\circ Tu^{-1}$ at the point $(q,x,\psi,\dot q,\dot x,\dot \psi)$ and the function $K:TM\to \bR$ is defined as $K(m,v)=\frac{1}{2}K_m(v,v)$. Finally, the map $J$ in coordinate form is
\begin{equation}\label{eq01}
  J(q,x,\psi,\dot q,\dot x,\dot \psi)=K_{\dot x^i}d\ee ^i+K_{\dot \psi^j}d\mu^j.
\end{equation}

\vspace{2mm}
\textbf{Remark 1.} Denote by $D$ the matrix of the order $k+\ell$ which is the right lower block of the matrix $||K_{ij}||$, i.e., $D=||K_{ij}||$ for $i,j=n+1,\ldots,n+k+\ell$. Since the quadratic form $K_m(v,v)$ is positively definite, $\det D \ne 0$.
\vspace{2mm}

\textbf{Proposition 2.} The pre-image of any point in $TS$ under the map $\rho=T\pi|{J_\mf}$ consists exactly of one orbit of the group $G_T$.
\vspace{2mm}

\begin{proof}
In special coordinates $Tg(q,x,\psi,\dot q,\dot x,\dot \psi)=(q,x+x_g,\psi+\psi_g)$,
where $x_g\in \bR^k$, $\psi_g=(\psi_g^1,\ldots,\psi_g^\ell)$, $0\ls \psi_g^j < 2\pi$, $j=1,\ldots,\ell$. In the natural chart on $TS$ corresponding to the coordinates $(q^1,\ldots,q^n)$ we write $T\pi(q,x,\psi,\dot q,\dot x,\dot \psi)=(q,\dot q)$. Fix a point  $\tau=(q,\dot q)\in TS$. The manifold $(T\pi)^{-1}(\tau)$ is diffeomorphic to $G{\times}\mG$. Moreover, if we choose $(x,\psi,\dot x,\dot \psi)$ as coordinates in $(T\pi)^{-1}(\tau)$, then any constant section $G{\times}\{(\dot x_0,\dot \psi_0)\}$ appears to be an orbit of $G_T$. Since $J$ is equivariant and $G$ trivially acts on $\mG^*$, $J$ is constant on the orbits of $G_T$. Therefore the restriction $J^{(\tau)}: G{\times}\mG\to \mG^*$ of $J$ to the pre-image of $\tau$, linear on each fiber $\{g\}{\times}\mG$, is constant on the sections of the form $G{\times}\{(\dot x_0,\dot \psi_0)\}$.
Obviously, $\rho^{-1}(\tau)=(J^{(\tau)})^{-1}(\mf)$ and it is exactly one orbit of $G_T$ if and only if $J^{(\tau)}|{\{g\}{\times}\mG}:\mG\to \mG^*$ is an isomorphism. This is equivalent to the condition that the system
\begin{equation}\label{eq02}
\begin{array}{ll}
    K_{\dot x ^i}(q,\dot q,\dot x,\dot \psi)=\xi_i & (i=1,\ldots,k),\\
    K_{\dot \psi ^j}(q,\dot q,\dot x,\dot \psi)=\eta_j & (j=1,\ldots,\ell)
\end{array}
\end{equation}
with fixes $q$ and $\dot q$ has a unique solution with respect to $(\dot x,\dot \psi)$. Here $\mf=\xi_id\ee ^i+\eta_jd \mu^j$ and the function $K$ is independent of $x,\psi$ due to $G_T$-invariance. But this system is linear with respect to $\dot x,\dot \psi$ with the non-degenerate matrix $D$. The statement is proved.
\end{proof}

Let us formulate some results from \cite{Abra} (the corresponding numbers from \cite{Abra} are given in parentheses\footnote{During this translation we also added the numbers according to the revised, enlarged, and reset edition of the book by R.Abraham and J.E.Marsden ``Foundations of Mechanics'', Benjamin, Readings, Mass., 1978, 806~p. Some notation was also changed (see Preface to the Second Edition therein).}). We forget for a while about the above notation.

Let $M$ be an $r$-dimensional manifold and $(U,\vp)$ a chart on $M$, $\vp(m)=(q^1,\ldots,q^n)$. Denote by $(q^1,\ldots,q^r, \dot q^1,\ldots,\dot q^r)$ and $(q^1,\ldots,q^r,p_1,\ldots,p_r)$ the corresponding natural coordinates on $TM$ and $T^*M$ respectively.

\parskip=2mm

\textbf{Proposition 3 (14.14, 3.2.10).} Let $M$ be an $r$-manifold and $V=T^*M$. Consider the natural projection $\tau_M^*: V \to M$ and $T\tau_M^*: TV \to TM$. Let $v_m$ $(m\in M)$ denote a point of $V$ and $w_{v_m}$ a point of $TV$ in the fiber over $v_m$. Define
$\theta_{v_m}: T_{v_m}V \to \bR$ as $w_{v_m} \mapsto (v_m \circ T \tau_M^*)(w_{v_m})$ and $\theta_0: v_m \mapsto \theta_{v_m}$. Then $\theta_0$ is a 1-form on $V$, and $\omega_0=-d\theta_0$ is a symplectic form on $V$; $\theta_0$ and $\omega_0$ are called the \textbf{canonical forms} on $V$.

\textbf{Remark 4.} In the natural coordinates $\theta_0=p_i dq^i$ and $\omega_0=dq_i \wedge dp^i$.

\textbf{Definition 5 (17.2, 3.5.2).} Let $M$ be a manifold and let $L:TM \to \bR$ be a smooth function. Then the map
$$
\bF L: TM \to T^*M: v_m \mapsto T_{v_m} L_m \in {\rm Lin}\,(T_m M,\bR)=T_m^*M
$$
is called the \textbf{fiber derivative} of $L$. Here $L_m$ denotes the restriction of $L$ to the fiber $T_m M$ over $m$.

\textbf{Definition 6 (17.7, 3.5.8).} A smooth function $L:TM\to \bR$ is called a \textbf{regular Lagrangian} if $\bF L$ is regular, i.e., the tangent map to $\bF L$ at each point is surjective.

Let $f:M\to N$ be a smooth map of manifolds. Denote by $f^*: \Omega_k(N) \to \Omega_k(M)$ the corresponding map of the spaces of differential $k$-forms.

\textbf{Remark 7.} In the natural coordinates
$$
\displaystyle \bF L(q^1,\ldots,q^r, \dot q^1,\ldots,\dot q^r)=(q^1,\ldots,q^r, L_{\dot q^1},\ldots,L_{\dot q^r}), \qquad L_{\dot q^i} =\frac{\prt}{\prt \dot q ^i}(L\circ T \vp^{-1}).
$$
If $\theta_L=(\bF L)^* \theta_0$, $\omega_L=(\bF L)^* \omega_0$, then in these coordinates $\theta_L=L_{\dot q^i} dq^i$, $\omega_L=dq^i\wedge dL_{\dot q^i}$.

\textbf{Proposition 8 (17.8, 3.5.9).} The function $L$ is a regular Lagrangian if and only if $\omega_L$ is a symplectic form on $TM$.

\textbf{Definition 9 (17.13, 3.5.12).} A \textbf{second-order equation} on a manifold $M$ is a vector field $X$ on $TM$ such that $T\tau_M \circ X$ is the identity on $TM$.

\textbf{Definition 10 (17.15, 3.5.14).} If $c: I\to TM$ ($I=[-\ve,\ve]$) is an integral curve of a vector field $X$ on $TM$, then $\tau_M \circ c: I \to M$ is called a \textbf{base integral curve} of $X$.

\textbf{Proposition 11.} A vector field $X$ on $TM$ is a second-order equation if and only if for any integral curve $c(t)$ of the field $X$ we have $c(t)=T(\tau_M \circ c)(t,1)$, i.e., any integral curve of $X$ equals the derivative of its base integral curve.

\textbf{Proposition 12 (17.16, 3.5.15).} Let $X$ be a vector field on $TM$ and $(U,\vp)$ be a chart on $M$ with $\vp(U)=U' \subset \bR^r$. Suppose that in natural coordinates $X$ has the form
$$
X:U'\times \bR^r \to U'\times \bR^r\times \bR^r\times \bR^r:(q,\dot q)\mapsto (q,\dot q,X_1(q,\dot q),X_2(q,\dot q)).
$$
Then $X$ is a second-order equation if and only if, for every chart, $X_1(q,\dot q)=\dot q$ for all $\dot q\in \bR^r$.

\textbf{Definition 13 (17.18, 3.5.11).} Given a regular Lagrangian $L:TM\to\bR$, define the \textbf{action} $A:TM\to \bR$ of $L$ by $A(v_m)=\bF L(v_m){\cdot}v_m$ and the \textbf{energy} $E$ of $L$ by $E=A-L$. Let $X_E$ be the vector field on $TM$ such that for any vector field $Y$ on $TM$ we have $dE(Y)=\omega_L(X_E,Y)$; $X_E$ exists and is uniquely defined due to the non-degeneracy of $\omega_L$. The dynamical system corresponding to $X_E$ is called the \textbf{Lagrangian system} with the Lagrangian $L$.

The latter term is legitimate due to the following statement.

\textbf{Proposition 14 (17.19-17.20, 3.5.17).} For a regular Lagrangian $L$ the field $X_E$ is a second-order equation and a curve $c:I \to M$ is a base integral curve of $X_E$ if and only if in natural coordinates it satisfies Lagrange's equations
\begin{equation}\label{eq03}
 \frac{d}{dt}L_{\dot q^i}(c(t),c'(t))-L_{q^i} (c(t),c'(t))=0.
\end{equation}
Here, of course, $(c(t),c'(t))=Tc(t,1)$, where $Tc:I{\times}\bR \to TM$.

\textbf{Definition 15 (18.1, 3.6.1).} A smooth function $L:TM\to \bR$ is called a \textbf{hyperregular Lagrangian} if $\bF L: TM \to T^*M$ is a diffeomorphism. In this case $\bF L$ is called the \textbf{Legendre transformation}.

\textbf{Proposition 16 (18.14, 3.6.4).} Let $L$ be a hyperregular Lagrangian. Then its action is $A=\theta_L(X_E)$.

\textbf{Remark 17.} If $L$ is a hyperregular Lagrangian, then its action in the natural coordinates has the form $A(q,\dot q)=\dot q^i L_{\dot q^i}$. It follows immediately from the fact that $X_E$ is a second-order equation.

\vspace{4mm}
\parskip=0mm

We now return to the problem considered. Again $M$ is the space of a mechanical system with symmetry $(M,K,V_0,G)$ and the Lagrangian of this system is $L=K-V$, where $V=V_0\circ \tau_M$. In this case $\bF L=K^*$, therefore, $L$ is a hyperregular Lagrangian. Denote its energy by $E$, then its action is $A=\theta_L(X_E)$. System \eqref{eq02} describing $J_\mf$ in special coordinates is solvable in $\dot x,\dot \psi$ due to Remark 1. Let the solved system be
\begin{equation}\label{eq04}
  \dot x^i=f^i(q,\dot q,\xi,\eta), \quad \dot \psi^j=h^j(q,\dot q,\xi,\eta) \qquad (i=1,\ldots,k; \; j=1,\ldots,\ell).
\end{equation}
In particular, this means that $J_\mf$ is a submanifold in $TM$ of co-dimension $k+\ell$. At the points $v_m\in J_\mf$, the tangent space $T_{v_m}J_\mf$ is given in $T_{v_m}TM$ by the system
\begin{equation}\label{eq05}
  d L_{\dot x^i}=0, \qquad d L_{\dot \psi^j}=0\qquad (i=1,\ldots,k; \; j=1,\ldots,\ell).
\end{equation}
Indeed, since $V$ does not depend on ${\dot x^i},{\dot \psi^j}$ the correspondent partial derivatives of $K$ and $L$ coincide.

Introduce the following notation. Let $F:J_\mf \to \bR$ be invariant under the action of $G$. By Proposition~2, there exists a unique function on $TS$ closing the diagram
$$
\begin{tikzcd}
\bR  & \\
J_\mf \arrow{u}{F}\arrow{r}{\rho}& TS
\end{tikzcd}
$$
Let us denote this function by $\bb{F}$: $F=\bb{F}\circ \rho$. If $F$ is a function on $TM$ preserved by the action of $G_T$, we denote $\bb{F}=\bb{F|{J_\mf}}$. The derivatives of such functions have the form
\begin{equation}\label{eq06}
  \begin{array}{c}
\displaystyle    \bb{F}_{q^\bq}=\bb{{F}_{q^\bq}+{F}_{\dot x^i}\frac{\prt f^i}{\prt q^\bq}+ {F}_{\dot \psi^j}\frac{\prt h^j}{\prt q^\bq}},\qquad
    \bb{F}_{\dot q^\bq}=\bb{{F}_{\dot q^\bq}+{F}_{\dot x^i}\frac{\prt f^i}{\prt \dot q^\bq}+ {F}_{\dot \psi^j}\frac{\prt h^j}{\prt \dot q^\bq}}\\[3mm]
    (\bq=1,\ldots,n).
  \end{array}
\end{equation}

Let $\dl$ stand for the external derivative on $TS$. Let us fix a chart $(U,\vp)$ on $S$, $\vp(s)=(q^1,\ldots,q^n)$. Let $(q,x,\psi)$ be the corresponding special coordinates on $M$. On $TU$, we define the following objects
$$
\vt_\mL(w_s)=\bb{L_{\dot q^\bq}}(w_s)\dl q^\bq, \qquad \Omega_\mL(w_s)= \dl q^\bq \wedge \dl \bb{L_{\dot q^\bq}}(w_s),
$$
where $w_s\in TU$ and $L_{\dot q^\bq}$ is calculated in the coordinates $(q,x,\psi,\dot q,\dot x,\dot \psi)$.

\textbf{Proposition 18.} If $\mf=0$, $\vt_\mL$ does not depend on a chart and defines a 1-form on $TS$. For arbitrary $\mf$, $\Omega_\mL$ is a symplectic 2-form on $TS$.

\vspace{2mm}
\begin{proof} Suppose we have two charts on $S$ with the coordinate transformation $v^\cq=v^\cq(q)$ $(\cq=1,\ldots,n)$. The corresponding transformation of the special coordinates on $M$ is
\begin{equation}\label{eq07}
  \begin{array}{c}
     v^\cq=v^\cq(q),\quad y^i=x^i+\chi^i(q), \quad \zeta^j=\psi^j+\vk^j(q)\quad (i=1,\ldots,k; \; j=1,\ldots,\ell).
  \end{array}
\end{equation}
This transformation does not change the above introduced coordinates on $\mG^*$ and from \eqref{eq02} we get
$$
\bb{L_{\dot q^\bq}}=\bb{
L_{\dot v^\cq}\frac{\prt v^\cq}{\prt q^\bq}+ L_{\dot y^i}\frac{\prt \chi^i}{\prt q^\bq}+ L_{\dot \zeta^j}\frac{\prt \vk^j}{\prt q^\bq}
}=\bb{L_{\dot v^\cq}}\frac{\prt v^\cq}{\prt q^\bq}+ \xi_i\frac{\prt \chi^i}{\prt q^\bq}+ \eta_j\frac{\prt \vk^j}{\prt q^\bq}.
$$
Therefore in the new chart,
\begin{equation}\label{eq08}
\vt_\mL=\bb{L_{\dot v^\cq}}\dl v^\cq+\xi_i\frac{\prt \chi^i}{\prt q^\bq}\dl q^\bq + \eta_j\frac{\prt \vk^j}{\prt q^\bq}\dl q^\bq.
\end{equation}
The condition $\mf=0$ is equivalent to $\xi_i=0,\eta_j=0$ $(i=1,\ldots,k; \; j=1,\ldots,\ell)$, and the first statement follows from \eqref{eq08}. Let us make the change of coordinates in $\Omega_\mL$ recalling that $\dl{\circ}\dl\equiv 0$:
$$
\begin{array}{rl}
\displaystyle   \Omega_\mL &  \displaystyle = \dl q^\bq \wedge \dl \bb{L_{\dot q^\bq}}= \dl q^\bq \wedge \left(
\frac{\prt v^\cq}{\prt q^\bq} \dl \bb{L_{\dot v^\cq}} +  \bb{L_{\dot v^\cq}} \dl \frac{\prt v^\cq}{\prt q^\bq}+ \xi_i \dl \frac{\prt \chi^i}{\prt q^\bq}+ \eta_j \dl \frac{\prt \vk^j}{\prt q^\bq} \right)= \\[3mm]
{} & \displaystyle = \dl v^\cq \wedge \dl \bb{L_{\dot v^\cq}}- \bb{L_{\dot v^\cq}} \dl\circ \dl v^\cq -\xi_i \dl\circ \dl \chi^i-\eta_j \dl\circ \dl \vk^j = \dl v^\cq \wedge \dl \bb{L_{\dot v^\cq}}.
\end{array}
$$
Thus, $\Omega_\mL$ does not depend on a chart and is obviously smooth and closed. It is now sufficient to show that $\Omega_\mL$ is non-degenerate. Let us expand $\dl \bb{L_{\dot q^\bq}}$ with the help of \eqref{eq06} and substitute the partial derivatives of $f^i$ and $h^j$ obtained from \eqref{eq05}. Calculating the determinant of $\Omega_\mL$ we get $\det \Omega_\mL =\left({\det ||K_{ij}||}/{\det D}\right)^2 \ne 0$, therefore $\Omega_\mL$ is non-degenerate. Finally, $(TS,\Omega_\mL)$ is a symplectic manifold.
\end{proof}

\vspace{2mm}
\textbf{Lemma 19.} Let $X,Y\in T_{v_m}J_\mf$. Then $\oq_L(X,Y)=\Omega_\mL(T_{v_m} \rho(X),T_{v_m} \rho(Y))$.
\vspace{2mm}

The proof is by direct calculation in special coordinates using \eqref{eq05}, \eqref{eq06} and Remark 7.

We now construct the dynamical system on $TS$ from the vector field $X_E$ on $TM$. It follows from the coordinate form given in Remark 7 that $\omega_L$ is preserved by the group $G_T$, i.e., for all $g\in G_T$, $v\in TM$, $X,Y\in T_v TM$
\begin{equation}\label{eq09}
  \omega_L(w)(X,Y)=\omega_L(g w)(Tg(X),Tg(Y)).
\end{equation}
Note that $E=K+V$ satisfies $E=E\circ g$ for all $g\in G_T$. Hence,
\begin{equation}\label{eq10}
  d E = dE\circ Tg.
\end{equation}
Pick $v\in TM$, $Y\in T_v TM$, $g\in G_T$ and denote $w=g^{-1}v$. Then from \eqref{eq09}, \eqref{eq10} we get
$$
\begin{array}{l}
  \omega_L(v)(Tg\circ X_E(w),Y)=\omega_L(w)(X_E(w),T g^{-1}(Y))=dE\circ Tg^{-1}(Y)=dE(Y)= \\
  \qquad  =\omega_L(v)(X_E(v),Y).
\end{array}
$$
So, $Tg\circ X_E\circ g^{-1} = X_E$. Since $J_\mf$ is an integral manifold of $X_E$, we have the commutative diagram
\begin{equation}\label{eq11}
\begin{tikzcd}
{} & J_\mf \arrow {ld}{\rho}\arrow{dd}{g}\arrow{r}{X_E} &  TJ_\mf \arrow{dd}{Tg}\arrow{rd}{T\rho} &\\
TS & & & TTS\\
& J_\mf\arrow{lu}{\rho}\arrow{r}{X_E}& TJ_\mf\arrow{ru}{T\rho}
\end{tikzcd}
\end{equation}
Then, according to Proposition 2, the vector field $\mX:TS\to TTS$ is well defined by the relation $\mX=T\rho\circ X_E \circ \rho^{-1}$.

\vspace{2mm}
\textbf{Proposition 20.} $\mX$ is a second-order equation and $\mX=X_{\bb{E}}$ in the symplectic structure $\Omega_\mL$.
\vspace{2mm}

\begin{proof}
By definition, $T\tau_S\circ \mX=T(\tau_S\circ \rho)\circ X_E \circ \rho^{-1}$. At the same time, $\tau_S\circ \rho=(\pi\circ \tau_M)|J_\mf$. Hence, $T\tau_S\circ \mX=T\pi \circ T \tau_M \circ X_E \circ \rho^{-1}$. Since $X_E$ is a second-order equation, we have $T \tau_M \circ X_E=\id{TM}$. Therefore $T\tau_S\circ \mX=T\pi \circ \rho^{-1}=\id{TS}$ and $\mX$ is a second-order equation on $S$.

Let us write down the definition of $X_{\bb{E}}$. Let $w_s \in TS$. Then for all $Y\in T_{w_s}TS$
\begin{equation}\label{eq12}
  \Omega_\mL(w_s)(X_{\bb{E}}(w_s),Y)=\dl \bb{E}(w_s)(Y).
\end{equation}
Note that $E|J_\mf=\bb{E}\circ \rho$, therefore,
\begin{equation}\label{eq13}
  dE|J_\mf =\dl \bb{E}\circ T\rho.
\end{equation}
Let $v\in \rho^{-1}(w_s)$. For any $Y\in T_{w_s}TS$ there exists $Y_0\in T_v TM$ such that $T\rho(Y_0)=Y$. From \eqref{eq13} we have
\begin{equation}\label{eq14}
  \dl \bb{E}(Y)=dE(Y_0)=\omega_L(X_E,Y_0).
\end{equation}
By Lemma 19, $\omega_L(X_E,Y_0)=\Omega_\mL(\mX,Y)$. Then \eqref{eq12} and \eqref{eq14} yield $\mX=X_{\bb{E}}$.
\end{proof}

\vspace{2mm}
\textbf{Proposition 21.} Let $a:I\to S$ be a base integral curve of $X_{\bb{E}}$, $b:I\to M$ a base integral curve of $X_E|J_\mf$. If $T(\pi\circ b)(0,1)=Ta(0,1)$, then $\pi\circ b \equiv a$.
\vspace{2mm}

\begin{proof}
According to Propositions 11, 14, and 20, $c(t)=Ta(t,1)$ and $d(t) = Tb(t,1)$ are trajectories of $X_{\bb{E}}$ and $X_E$ respectively. In particular, $Td(t,1)=X_E\circ d(t)$. Let us calculate
$
T(\rho\circ d)(t,1)=T\rho \circ X_E\circ d(t)=X_{\bb{E}}\circ \rho\circ d(t),
$
i.e., $\rho\circ d(t)$ is a trajectory of $X_{\bb{E}}$. It follows from the uniqueness theorem, that if $\rho\circ d(0)=c(0)$, then $\rho\circ d \equiv c$, or $T(\pi\circ b)(t,1)\equiv Ta(t,1)$. Hence, $\pi\circ b \equiv a$.
\end{proof}

\vspace{2mm}
\textbf{Definition 22.} The dynamical system generated by the vector field $X_{\bb{E}}$ is called the \textbf{reduced system} on $TS$.
\vspace{2mm}

\textbf{Remark 23.} Let $(q(t),\dot q(t))$ be a trajectory of the reduced system with initial conditions $q(0)=q_0,\dot q(0)=\dot q_0$. In virtue of \eqref{eq04} and Proposition 21, the base trajectory on $M$ in special coordinates corresponding to the trajectory of $X_E$ with initial conditions $q(0)=q_0,\dot q(0)=\dot q_0, x(0)=x_0, \psi(0)= \psi_0$ has the form
$$
\begin{array}{c}
\displaystyle  q^\bq = q^\bq(t), \quad x^i=x_0^i+\int_0^t f^i(q(t),\dot q(t),\xi,\eta) dt,\quad \psi^j=\psi_0^j+\int_0^t h^j(q(t),\dot q(t),\xi,\eta) dt
 \\[3mm]
  (\bq=1,\ldots,n, \quad i=1,\ldots,k, \quad j=1,\ldots, \ell).
\end{array}
$$
Hence, the trajectories of the whole system are restored from the trajectories of the reduced system by direct integration. Note that for the trajectory of $X_E$ lying in $J_\mf$ the values $\dot x(0), \dot \psi(0)$ cannot be chosen arbitrary, but are found from \eqref{eq04}.
\vspace{2mm}

\section{Properties of the reduced system}
\begin{theorem}\label{theo1} If $\mf=0$ the reduced system is a Lagrangian system with the Lagrangian $\mL=\bb{L}$.
\end{theorem}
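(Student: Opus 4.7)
The plan is to verify three things for the function $\mL=\bb{L}$ on $TS$: (a) the Lagrange 1-form and the Lagrange symplectic form of $\mL$ coincide with $\vt_\mL$ and $\Omega_\mL$ respectively; (b) $\mL$ is a regular Lagrangian; (c) the energy of $\mL$ equals $\bb E$ when $\mf=0$. Once these are in place, Proposition 20 identifies the reduced field $\mX=X_{\bb E}$ with the Euler--Lagrange field of $\mL$, which is exactly the theorem.

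First I would work in natural coordinates $(q,\dot q)$ on $TS$ coming from a standard chart, together with the associated special coordinates $(q,x,\psi,\dot q,\dot x,\dot \psi)$ on $TM$. Using the $G_T$-invariance of $L$ and the solved form \eqref{eq04}, the function $\mL$ reads
\[
\mL(q,\dot q)=L\bigl(q,\dot q,\, f(q,\dot q,\xi,\eta),\, h(q,\dot q,\xi,\eta)\bigr).
\]
The chain rule then gives
\[
\mL_{\dot q^\bq}= L_{\dot q^\bq}+L_{\dot x^i}\frac{\prt f^i}{\prt \dot q^\bq}+L_{\dot \psi^j}\frac{\prt h^j}{\prt \dot q^\bq},
\]
which is precisely the right-hand side of the second equation in \eqref{eq06} for $\bb{L}_{\dot q^\bq}$. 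Hence $\mL_{\dot q^\bq}=\bb{L_{\dot q^\bq}}$, and the coordinate expressions of Remark 7 show that the Lagrange 1-form $\theta_\mL$ and symplectic 2-form $\omega_\mL$ of $\mL$ are exactly $\vt_\mL$ and $\Omega_\mL$. Since Proposition 18 asserts that $\Omega_\mL$ is symplectic for every $\mf$, Proposition 8 then gives that $\mL$ is a regular Lagrangian.

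Second, I would compare the energies. By Remark 17 the action of $L$ in special coordinates is $A=\dot q^\bq L_{\dot q^\bq}+\dot x^i L_{\dot x^i}+\dot \psi^j L_{\dot \psi^j}$. On $J_\mf$, because $V$ depends only on the base point, $L_{\dot x^i}=K_{\dot x^i}$ and $L_{\dot \psi^j}=K_{\dot \psi^j}$, so by \eqref{eq02} these equal $\xi_i$ and $\eta_j$ respectively. Projecting via $\rho$ and substituting $\dot x=f$, $\dot\psi=h$ yields
\[
\bb E = \bb A - \bb L = \dot q^\bq \bb{L_{\dot q^\bq}} + f^i\xi_i + h^j\eta_j - \mL = E_\mL + f^i\xi_i + h^j\eta_j,
\]
where $E_\mL = \dot q^\bq \mL_{\dot q^\bq}-\mL$ is the energy of $\mL$. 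The hypothesis $\mf=0$ forces $\xi_i=0$ and $\eta_j=0$, so $\bb E = E_\mL$.

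Finally, because both the symplectic form and the Hamiltonian match on $TS$, the Hamiltonian vector fields coincide: $X_{\bb E}=X_{E_\mL}$. Proposition 20 identifies the reduced vector field $\mX$ with $X_{\bb E}$, so by Definition 13 the reduced system is the Lagrangian system of $\mL$. The only nontrivial computation is the identity $\mL_{\dot q^\bq}=\bb{L_{\dot q^\bq}}$, which is immediate once one recognizes formula \eqref{eq06} as the chain rule; the role of the hypothesis $\mf=0$ is merely to kill the linear-in-$\mf$ discrepancy $f^i\xi_i+h^j\eta_j$ that would otherwise separate $\bb E$ from $E_\mL$.
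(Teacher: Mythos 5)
Your argument is, in substance, the paper's own proof: the key identity \eqref{eq16} ($\mL_{\dot q^\bq}=\bb{L_{\dot q^\bq}}$ for $\mf=0$), the identification of the Lagrange forms of $\mL$ with $\vt_\mL$ and $\Omega_\mL$, the comparison of the two energies, and the appeal to Proposition 20. The only real difference is cosmetic: you compute $\bb{E}$ directly in special coordinates (via $L_{\dot x^i}=\xi_i$, $L_{\dot \psi^j}=\eta_j$ on $J_\mf$) instead of going through hyperregularity of $\mL$ and Proposition 16 / Remark 17 as the paper does; both routes give $\mE=\bb{E}$ and hence $X_\mE=X_{\bb{E}}$, and regularity of $\mL$ (your Proposition 18 plus Proposition 8 argument) is indeed enough for the definition of $X_\mE$.

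One point needs correcting, although it does not sink the proof because the theorem assumes $\mf=0$: the identity $\mL_{\dot q^\bq}=\bb{L_{\dot q^\bq}}$ is \emph{not} a consequence of the chain rule alone. What the chain rule, i.e.\ \eqref{eq06}, gives is $\mL_{\dot q^\bq}=\bb{L}_{\dot q^\bq}=\bb{L_{\dot q^\bq}}+\xi_i\,{\prt f^i}/{\prt \dot q^\bq}+\eta_j\,{\prt h^j}/{\prt \dot q^\bq}$, because on $J_\mf$ one has $L_{\dot x^i}=\xi_i$ and $L_{\dot\psi^j}=\eta_j$ --- the same substitution you use later in the energy computation; the derivative terms do not vanish in general, since the metric typically has cross terms between $\dot q$ and $(\dot x,\dot\psi)$. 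So the hypothesis $\mf=0$ is already used at your ``Hence'' step, to make the Lagrange forms of $\bb{L}$ coincide with $\vt_\mL$ and $\Omega_\mL$; it is not only the discrepancy $\xi_i f^i+\eta_j h^j$ in the energy that it kills. Your closing sentence, taken literally, would make $\bb{L}$ (up to the energy shift) a Lagrangian of the reduced system for arbitrary $\mf$, contradicting Theorem~\ref{theo2}, where precisely these derivative terms force the corrected local Lagrangian $\bb{L}-\xi_i f^i-\eta_j h^j$ (compare \eqref{eq23}--\eqref{eq24}). With the ``Hence'' justified by $\xi=0$, $\eta=0$, the rest of your proof is correct.
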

\begin{proof}
Let us show that $\bF \mL: TS\to T^*S$ is a bundle isomorphism. Take a chart $(U,\vp)$ on $S$, $\vp: s\mapsto (q^1,\ldots,q^n)$. In the natural coordinates
\begin{equation}\label{eq15}
  \bF\mL: (q^1,\ldots,q^n,\dot q^1,\ldots,\dot q^n)\mapsto (q^1,\ldots,q^n,\mL_{\dot q^1},\ldots,\mL_{\dot q^n}).
\end{equation}
Let $(q,x,\psi)$ be the special coordinates corresponding to $(U,\vp)$. Note that for $\mf=0$,
\begin{equation}\label{eq16}
  \mL_{\dot q^\bq}=\bb{L_{\dot q^\bq}}, \qquad \bq=1,\ldots,n.
\end{equation}
The map $(\dot q,\dot x,\dot \psi) \mapsto (L_{\dot q},L_{\dot x},L_{\dot \psi})$ is an isomorphism $\bR^{n+k+\ell}\to\bR^{n+k+\ell}$ and the map $(\dot x,\dot \psi) \mapsto (L_{\dot x},L_{\dot \psi})$ is one-to-one (see Remark 1). Then $\bF\mL$ is an isomorphism on fibers. Obviously, $\bF\mL$ is smooth. Moreover, due to the positive definiteness of $K$, $T_{w_s}\bF\mL$ has full rank for all $w_s\in TS$. Thus, $\bF\mL$ is a diffeomorphism and $\mL$ is a hyperregular Lagrangian on $TS$.

Obviously, $\bF\mL$ takes $\vt_\mL$ and $\Omega_\mL$ to the canonical forms on $T^*S$ (see \eqref{eq15}, \eqref{eq16} and Remarks 4 and 7). Let $\mE$ be the energy of $\mL$. According to Proposition 16,
\begin{equation}\label{eq17}
  \mL=\vt_\mL(\mE)-\mE.
\end{equation}
In the chart $(U,\vp)$
\begin{equation}\label{eq18}
  \vt(X_{\bb{E}})=\dot q^\bq \mL_{\dot q^\bq},
\end{equation}
since $X_{\bb{E}}$ is a second-order equation (Propositions 12 and 20). Then by Remark 17
\begin{equation}\label{eq19}
  \vt_\mL(X_\mE)=\vt_\mL(X_{\bb{E}}).
\end{equation}
The Lagrangian of the system on $M$ is $L=\theta_L(X_E)-E$. Since $\mf=0$, using Remark 17 and equations \eqref{eq16}, \eqref{eq18} we get
$$
\bb{\theta_L(X_E)}=\bb{\dot q^\bq L_{\dot q^\bq}}=\dot q^\bq \bb{L_{\dot q^\bq}}=\vt_\mL(X_{\bb{E}}).
$$
Hence,
\begin{equation}\label{eq20}
  \mL=\bb{L}=\vt_\mL(X_{\bb{E}})-\bb{E}.
\end{equation}
Comparing \eqref{eq17}, \eqref{eq19}, and \eqref{eq20}, we see that $\mE=\bb{E}$, therefore $X_\mE=X_{\bb{E}}$. \end{proof}

\begin{theorem}\label{theo2}
In the case $\mf \ne 0$ the reduced system is locally Lagrangian in the following sense. Suppose $(U,\vp)$ is a chart on $S$, $\vp: s\mapsto (q^1,\ldots,q^n)$ and $(q,x,\psi)$ the corresponding  special coordinates in $\pi^{-1}(U)$. Having equations \eqref{eq04} valid on $J_\mf\cap \pi^{-1}(U)$, consider the function $\mL:TU\to\bR$ which in the chart $(TU,T\vp)$ has the form
$$
\mL=\bb{L}-\xi_if^i-\eta_j h^j.
$$
The dynamical system $X_{\bb{E}}|TU$ on the manifold $TU$ is a Lagrangian system with the Lagrangian $\mL$. In particular, a curve $c:I\to U$ is a base integral curve of this system if and only if in any chart $(U,\sg)$, $\sg: s \mapsto (v^1,\ldots,v^n)$ it satisfies the equations
\begin{equation}\label{eq21}
 \frac{d}{dt}\mL_{\dot v^\cq}(c(t),c'(t))-\mL_{v^\cq} (c(t),c'(t))=0 \qquad (\cq=1,\ldots,n).
\end{equation}
\end{theorem}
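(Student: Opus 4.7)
The plan is to mimic the proof of Theorem~\ref{theo1} locally on $TU$. I need to verify three things: (a) the proposed function $\mL$ is a regular Lagrangian on $TU$; (b) its Lagrangian 2-form $\omega_\mL$ equals the restriction $\Omega_\mL|TU$; (c) its energy $\mE$ equals $\bb{E}|TU$. Once these are in place, $X_\mE$ and $X_{\bb{E}}|TU$ are both the Hamiltonian vector field for $\bb{E}$ with respect to $\Omega_\mL|TU$, so they coincide, and Proposition~14 applied to $\mL$ in any chart then yields the characterisation of base integral curves by the Lagrange equations~\eqref{eq21}.

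The single substantive computation is that of $\mL_{\dot q^\bq}$. Using \eqref{eq06} to expand $\bb{L}_{\dot q^\bq}$ and then subtracting the $\dot q^\bq$-derivatives of $\xi_i f^i$ and $\eta_j h^j$, the cross-terms involve the factors $\bb{L_{\dot x^i}}$ and $\bb{L_{\dot \psi^j}}$. Because $V$ is velocity-independent and \eqref{eq02} defines $J_\mf$, these factors are exactly the constants $\xi_i$ and $\eta_j$, so all correction terms cancel and
\[
\mL_{\dot q^\bq}=\bb{L_{\dot q^\bq}}.
\]
It follows that the Lagrangian 1-form $\mL_{\dot q^\bq}\,dq^\bq$ of $\mL$ is exactly the local 1-form $\vt_\mL$ from Proposition~18, so that $\omega_\mL=dq^\bq\wedge d\mL_{\dot q^\bq}=\Omega_\mL|TU$. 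Non-degeneracy of $\Omega_\mL$ (Proposition~18) gives regularity of $\mL$ by Proposition~8.

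For the energies, the natural-coordinate formula $A_\mL(q,\dot q)=\dot q^\bq \mL_{\dot q^\bq}$ (Remark~17) yields
\[
\mE=\dot q^\bq\bb{L_{\dot q^\bq}}-\bb{L}+\xi_i f^i+\eta_j h^j.
\]
On the other hand, $E=A-L=\dot q^\bq L_{\dot q^\bq}+\dot x^i L_{\dot x^i}+\dot\psi^j L_{\dot\psi^j}-L$; restricting to $J_\mf$ replaces the middle terms by $f^i\xi_i+h^j\eta_j$, and projecting through $\rho$ produces the same expression as $\mE$, hence $\mE=\bb{E}|TU$. The principal obstacle is only the cancellation behind $\mL_{\dot q^\bq}=\bb{L_{\dot q^\bq}}$, which is precisely what motivates the gauge correction $-\xi_i f^i-\eta_j h^j$ in the definition of $\mL$; once it is in hand the rest of the argument is the same symplectic/energy identification used in Theorem~\ref{theo1}.
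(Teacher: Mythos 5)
Your proposal is correct and follows essentially the paper's own route: the same cancellation $\mL_{\dot q^\bq}=\bb{L_{\dot q^\bq}}$ (the paper's \eqref{eq23}--\eqref{eq24} specialized to the defining chart, using $L_{\dot x^i}=\xi_i$, $L_{\dot\psi^j}=\eta_j$ on $J_\mf$), the identifications $\omega_\mL=\Omega_\mL|TU$ and $\mE=\bb{E}|TU$, and then non-degeneracy of $\Omega_\mL$ plus Proposition 14. The only minor difference is that you verify the fiber-derivative identity in the single defining chart $(U,\vp)$ — which suffices, since $U$ is covered by that one chart and the objects being compared are chart-independent — and obtain the energy from the coordinate formula for the action, whereas the paper redoes the computation in an arbitrary chart (\eqref{eq22}--\eqref{eq25}) and identifies the energy via Proposition 16.
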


\begin{proof}
Consider a 1-form $\vt_\mL$ on $TU$ given in the chart $(U,\sg)$ by $\vt_\mL(w_s)=\bb{L_{\dot q^\bq}}\dl q^\bq$, $w_s\in TU$. Let $v^\cq=v^\cq(q^1,\ldots,q^n)$ be the transition functions from $(U,\vp)$ to $(U,\sg)$. The corresponding change of the special coordinates in $\pi^{-1}(U)$ is given by \eqref{eq07}. Similar to \eqref{eq08}, in the chart $(U,\sg)$ we have
\begin{equation}\label{eq22}
\vt_\mL=\left\{ \bb{L_{\dot v^\cq}}+\xi_i\frac{\prt \chi^i}{\prt q^\bq} \frac{\prt q^\bq}{\prt v^\cq}  + \eta_j\frac{\prt \vk^j}{\prt q^\bq} \frac{\prt q^\bq}{\prt v^\cq}\right\} \dl v^\cq.
\end{equation}
Let us calculate the values
\begin{equation}\label{eq23}
\mL_{\dot v^\cq}=\bb{L}_{\dot v^\cq}- \xi_i\frac{\prt f^i}{\prt \dot q^\bq} \frac{\prt q^\bq}{\prt v^\cq}  - \eta_j\frac{\prt h^j}{\prt \dot q^\bq} \frac{\prt q^\bq}{\prt v^\cq}.
\end{equation}
According to \eqref{eq06} $\displaystyle \bb{L}_{\dot v^\cq}=\bb{L_{\dot v^\cq}}+ \xi_i\frac{\prt \dot y^i}{\prt \dot v^\cq} + \eta_j\frac{\prt \dot \zeta^j}{\prt \dot v^\cq}$, and from \eqref{eq07}
\begin{equation*}
\frac{\prt \dot y^i}{\prt \dot v^\cq}= \frac{\prt f^i}{\prt \dot q^\bq} \frac{\prt q^\bq}{\prt v^\cq}  + \frac{\prt \chi^i}{\prt q^\bq} \frac{\prt q^\bq}{\prt v^\cq}, \qquad
\frac{\prt \dot \zeta^j}{\prt \dot v^\cq}= \frac{\prt h^j}{\prt \dot q^\bq} \frac{\prt q^\bq}{\prt v^\cq}  + \frac{\prt \vk^j}{\prt q^\bq} \frac{\prt q^\bq}{\prt v^\cq}.
\end{equation*}
Substituting these values in \eqref{eq23}  we obtain
\begin{equation}\label{eq24}
\mL_{\dot v^\cq}=  \bb{L_{\dot v^\cq}}+\xi_i\frac{\prt \chi^i}{\prt q^\bq} \frac{\prt q^\bq}{\prt v^\cq}  + \eta_j\frac{\prt \vk^j}{\prt q^\bq} \frac{\prt q^\bq}{\prt v^\cq}.
\end{equation}
Comparing \eqref{eq22} and \eqref{eq24}, we see that in an arbitrary chart
\begin{equation}\label{eq25}
  \vt_\mL=\mL_{\dot v^\cq} \dl  v^\cq.
\end{equation}
In the same way as in the proof of Theorem~\ref{theo1}, using in particular \eqref{eq16}, we can show that $\mL$ is a hyperregular Lagrangian on $TU$ and, according to \eqref{eq25}, $\bF\mL$ takes $\vt_\mL$ and $\Omega_\mL=-\dl \vt_\mL$ to the canonical forms on $T^*U$. Again
\begin{equation}\label{eq26}
  \mL=\vt_\mL(X_\mE)-\mE,
\end{equation}
where $\mE$ is the energy of $\mL$, and
\begin{equation}\label{eq27}
  \vt_\mL(X_{\bb{E}})=\vt_\mL(X_\mE).
\end{equation}
In this case $\bb{\theta_L(X_E)}=\vt_\mL(X_{\bb{E}})+\xi_i f^i+\eta_j h^j$. Then
\begin{equation}\label{eq28}
  \mL= \bb{L}- \xi_i f^i-\eta_j h^j=\bb{\theta_L(X_E)}-\bb{E}- \xi_i f^i-\eta_j h^j=\vt_\mL(X_{\bb{E}})-\bb{E}.
\end{equation}
From \eqref{eq26} -- \eqref{eq28} we get $\mE=\bb{E}$ and, consequently, $X_\mE=X_{\bb{E}}$, since $\Omega_\mL=-\vt_\mL$ coincides with the restriction to $TU$ of the above defined form $\Omega_\mL$. The second part of the theorem follows from Proposition 14.
\end{proof}

To obtain the next statement, for each standard subset $U\subset S$ let us fix some standard chart $(U,\vp)$. Then for each $U$ by the construction described in Theorem~\ref{theo2} we define a local Lagrangian $\mL_U$ on $TU$. We want to find out when these functions can be ``glued'' to produce one function on $TS$ which is a Lagrangian of the reduced system.

Let $U$ and $V$ be standard sets with non-empty intersection. It follows from \eqref{eq04}, \eqref{eq07} and the definition of the local Lagrangians on $TU$ and $TV$ that there exists a function $\oq_{U,V}$ on $U\cap V$ such that $(\mL_U-\mL_V)|U\cap V = d\oq_{U,V}$. The following properties are obvious:

1) $d\oq_{U,V}=-d\oq_{V,U}$ for all $U,V$;

2) if $U,V,W$ have a non-empty intersection, then $d\oq_{U,V}+d\oq_{V,W}+d\oq_{W,U}=0$ on $U\cap V\cap W$.

Suppose $\mA=\{U_\aq\}_{\aq\in A}$ is a cover of $S$ by standard subsets. Then the formula $c_\mA (\aq_0,\aq_1)=d\oq_{U_{\aq_0},U_{\aq_1}}$ define a 1-dimensional cochain $c_\mA$ of this cover with coefficients in the sheaf $\mR$ of germs of closed 1-forms on $S$ \cite{Hirz}. The above mentioned properties 1 and 2 mean that this cochain is a cocycle and therefore defines an element of the group $H^1(\mA,\mR)$. Denote this element by $\el{c_\mA}$. Let us show that the set of $\el{c_\mA}$ for all standard covers $\mA$ defines an element of the group $H^1(S,\mR)=\lim\limits_{\longrightarrow} H^1(\mA,\mR)$. It is sufficient to show that if a cover $\mB=\{V_\bq\}_{\bq\in B}$ is a refinement of $\mA=\{U_\aq\}_{\aq\in A}$ and $\nu: B \to A$ is a refinement map, i.e., $V_\bq \subset U_{\nu(\bq)}$ for all $\bq\in B$, then the cocycles $\nu^*c_\mA$ and $c_\mB$ are cohomologic. Define a 0-cochain of the cover $\mB$ by $c^0(\bq)=d\oq_{U_{\nu(\bq)},V_\bq}$. Then
$$
\dl ^0 c^0(\bq_0,\bq_1)
=
d\oq_{U_{\nu(\bq_1)},V_{\bq_1}}|(V_{\bq_0}\cap V_{\bq_1})-d\oq_{U_{\nu(\bq_0)},V_{\bq_0}}|(V_{\bq_0}\cap V_{\bq_1}).
$$
This by definition of $\oq_{U,V}$ gives
$$
\dl ^0 c^0(\bq_0,\bq_1)
=
\left(
\mL_{U_{\nu(\bq_1)}}- \mL_{V_{\bq_1}}-\mL_{U_{\nu(\bq_0)}}+ \mL_{V_{\bq_0}}
\right)|(V_{\bq_0}\cap V_{\bq_1}).
$$
Consider the restrictions to $V_{\bq_0}\cap V_{\bq_1}$ of the equalities
$$
\mL_{U_{\nu(\bq_0)}}-\mL_{V_{\bq_0}} =d\oq_{U_{\nu(\bq_0)},V_{\bq_0}}, \qquad \mL_{U_{\nu(\bq_1)}}-\mL_{V_{\bq_1}} =d\oq_{U_{\nu(\bq_1)},V_{\bq_1}}
$$
and subtract the first equality from the second one. Then we get
$$
\begin{array}{l}
  \left(
\mL_{U_{\nu(\bq_1)}}- \mL_{V_{\bq_1}}-\mL_{U_{\nu(\bq_0)}}+ \mL_{V_{\bq_0}}
\right)|(V_{\bq_0}\cap V_{\bq_1})= \\
\qquad  = \left(d\oq_{U_{\nu(\bq_1)},V_{\bq_1}}-d\oq_{U_{\nu(\bq_0)},V_{\bq_0}}\right)|(V_{\bq_0}\cap V_{\bq_1})\stackrel{{\rm def}}{=} (\nu^*c_\mA-c_\mB)(\bq_0,\bq_1).
\end{array}
$$
Finally, we have $\dl ^0 c^0=\nu^*c_\mA-c_\mB$.

Thus, the reduced system uniquely defines an element $c=\lim\limits_{\longrightarrow}\el{c_\mA} \in H^1(S,\mR)$.

\begin{theorem}\label{theo3} The reduced system has a global Lagrangian of the form $\mL=\bb{L}+F+P\circ \tau_S$, where $P$ is a function on $S$ and $F$ is a 1-form on $S$ considered as a function on $TS$ linear on fibers, if and only if the corresponding element $c$ of $H^1(S,\mR)$ equals zero.
\end{theorem}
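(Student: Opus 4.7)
The plan is to unpack the affine structure of $\mL_U=\bb{L}-\xi_if^i-\eta_jh^j$ from Theorem~\ref{theo2} and then translate the existence of a global Lagrangian into a Cech coboundary computation. Since system \eqref{eq02} is linear in the velocities with coefficients depending only on $q$, its solution \eqref{eq04} gives each $f^i$ and $h^j$ as an affine function of $\dot q$. Hence there is a decomposition
$$
\mL_U=\bb{L}-A_U-B_U\circ\tau_S,
$$
where $A_U$ is a $1$-form on $U$ (viewed fiberwise-linearly on $TU$) and $B_U$ is a function on $U$. Since the difference $\mL_U-\mL_V=d\oq_{U,V}$ constructed before the theorem is purely fiberwise-linear, the constant-in-$\dot q$ parts must agree: $B_U=B_V$ on $U\cap V$. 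Thus the $B_U$ glue to a global function $B$ on $S$, while $A_V-A_U=d\oq_{U,V}$ on overlaps.

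For necessity, assume $\mL=\bb{L}+F+P\circ\tau_S$ is a global Lagrangian for $X_{\bb{E}}$. On each $TU$ both $\mL$ and $\mL_U$ give the same vector field, so $\mL-\mL_U=(F|U+A_U)+((P+B)|U)\circ\tau_S$ is a null Lagrangian. A direct expansion of Lagrange's equations shows that autonomous null Lagrangians are exactly closed $1$-forms plus locally constant functions; so $\gamma_U:=F|U+A_U$ is a closed $1$-form on $U$ and $P+B$ is locally constant. Setting $c^0(\aq):=\gamma_{U_\aq}$ defines a $0$-cochain with values in $\mR$ for which
$$
\dl^0c^0(\aq_0,\aq_1)=\gamma_{U_{\aq_1}}-\gamma_{U_{\aq_0}}=A_{U_{\aq_1}}-A_{U_{\aq_0}}=d\oq_{U_{\aq_0},U_{\aq_1}}=c_\mA(\aq_0,\aq_1).
$$
Hence $[c_\mA]=0$ in $H^1(\mA,\mR)$ for every standard cover and therefore $c=0$.

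Conversely, assume $c=0$. Pick a standard cover $\mA=\{U_\aq\}$ and a $0$-cochain $\{\lambda_\aq\}$ of closed $1$-forms with $\lambda_{\aq_1}-\lambda_{\aq_0}=d\oq_{U_{\aq_0},U_{\aq_1}}$ on overlaps. Put $\mL'_\aq:=\mL_{U_\aq}+\lambda_\aq$ on $TU_\aq$. A short check in natural coordinates shows that adding a closed $1$-form to a hyperregular Lagrangian alters neither $\oq_L$ nor the energy, so $\mL'_\aq$ is still a Lagrangian for $X_{\bb{E}}|TU_\aq$. On overlaps,
$$
\mL'_{\aq_1}-\mL'_{\aq_0}=(\mL_{U_{\aq_1}}-\mL_{U_{\aq_0}})+(\lambda_{\aq_1}-\lambda_{\aq_0})=-d\oq_{U_{\aq_0},U_{\aq_1}}+d\oq_{U_{\aq_0},U_{\aq_1}}=0,
$$
so the $\mL'_\aq$ glue to a global function $\mL$ on $TS$. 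Substituting the decomposition of $\mL_{U_\aq}$ gives $\mL'_\aq=\bb{L}+(\lambda_\aq-A_{U_\aq})-B\circ\tau_S$; the same overlap identities show that $\lambda_\aq-A_{U_\aq}$ agree on intersections, so they define a global $1$-form $F$ on $S$. With $P:=-B$ we obtain $\mL=\bb{L}+F+P\circ\tau_S$ of the required form.

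The main technical ingredient is the autonomous null-Lagrangian characterization, which reduces to a direct polynomial identity in Lagrange's equations (a null Lagrangian must be affine in $\dot q$ with closed $1$-form part and locally constant scalar part); once this is in hand, the remainder is the cohomological bookkeeping guided by the overlap identities above.
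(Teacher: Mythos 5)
Your proof is correct and follows essentially the same route as the paper: in both directions the key observation is that on a standard set any admissible Lagrangian differs from $\mL_U$ by a closed $1$-form (plus a locally constant function), so that the overlap data $d\oq_{U,V}$ form a coboundary in the sheaf $\mR$ exactly when a global Lagrangian of the stated form exists. The differences are only cosmetic: you take the closed forms $F|U_{\aq}+A_{U_{\aq}}$ (respectively $\lambda_\aq$) directly as the $0$-cochain where the paper passes to primitives $d\oq_\aq$ on the simply connected standard sets, and you make explicit the affine-in-velocity decomposition of $\mL_U$ and the check that the glued Lagrangian has the form $\bb{L}+F+P\circ\tau_S$, points the paper treats as obvious.
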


\begin{proof}
Suppose the reduced system has a global Lagrangian of the needed type. Consider a cover $\mA=\{U_\aq\}_{\aq\in A}$ by standard sets. The local Lagrangians $\mL_{U_\aq}$ and $\mL|TU_\aq$ have the same quadratic part and define the same vector field on $TU_\aq$. Then, as $U_\aq$ is simple connected, there exists $\oq_\aq: U_\aq \to \bR$ such that $\mL=\mL_{U_\aq}+d\oq_\aq$ on $TU_\aq$. If $U_{\aq_0}\cap  U_{\aq_1} \ne \varnothing$, then $d\oq_{U_{\aq_0},U_{\aq_1}}=d\oq_{\aq_1}-d\oq_{\aq_0}$, therefore the cocycle $c_\mA$ is a coboundary, and, consequently, $c=0$.

Now suppose that $c=0$. Then there exists a standard cover $\mA=\{U_\aq\}_{\aq\in A}$ such that $c_\mA$ is a coboundary, i.e., there exists a set of functions $\oq_\aq: U_\aq \to \bR$ such that
$d\oq_{U_{\aq_0},U_{\aq_1}}=d\oq_{\aq_1}-d\oq_{\aq_0}$. Put $\mL=\mL_{U_\aq}+d\oq_\aq$ on $TU_\aq$. Obviously, $\mL$ is a well defined Lagrangian on $TS$ and the corresponding Lagrangian system coincides with the reduced system. This proves the theorem.
\end{proof}

It is known that the group $H^1(S,\mR)$ is isomorphic to the group $H^2(S,\bR)$ of the real cohomologies of the manifold $S$ (see e.g. \cite{Hirz}). From Theorem~\ref{theo3} we have the following sufficient condition for the reduced system to be Lagrangian: if $H^2(S,\bR)$ is trivial, then the reduced system always admits a global Lagrangian.

\section{Application to the rigid body dynamics}
Let us consider the problem of the motion of a rigid body having a fixed point in a force field with a potential $V_0$ invariant under the group of rotations about some axis fixed in space and crossing the fixed point of the body. The problem is described by mechanical system with symmetry; the symmetry group is isomorphic to $S^1$.

It is convenient to represent the configuration space $M$ as the manifold $T^1S^2$, which is the bundle of the unit tangent vectors over the 2-sphere $S^2$. Let us consider this sphere to be the unit sphere in space.  Let $\bi_1,\bi_2,\bi_3$ be an orthonormal frame in space; $\bi_3$ shows the direction of the symmetry axis. Fix the element $z_0=(N,\xi_0)\in T^1 S^2$ such that $N=(0,0,1)$ is the north pole of $S^2$ and $\xi_0$ is the unit tangent vector at $N$ parallel to~$\bi_2$. Let $\be_1,\be_2,\be_3$ be the orths of the principal inertia axes in the body. For each position
$e=(\be_1,\be_2,\be_3)$ of the body there exists a unique element $g_e$ of the group $SO(3)$ which moves $\be_1,\be_2,\be_3$ to $\bi_1,\bi_2,\bi_3$ respectively. Let us assign to the position $e$ the element of $T^1 S^2$ to which $z_0$ is taken by the rotation $g_e \in SO(3)$: $e\mapsto g_e(z_0)$.

\begin{table}[ht]
\def\qq{0.35}
\centering
\begin{tabular}{cc}
\includegraphics[width=\qq\textwidth, keepaspectratio = true]{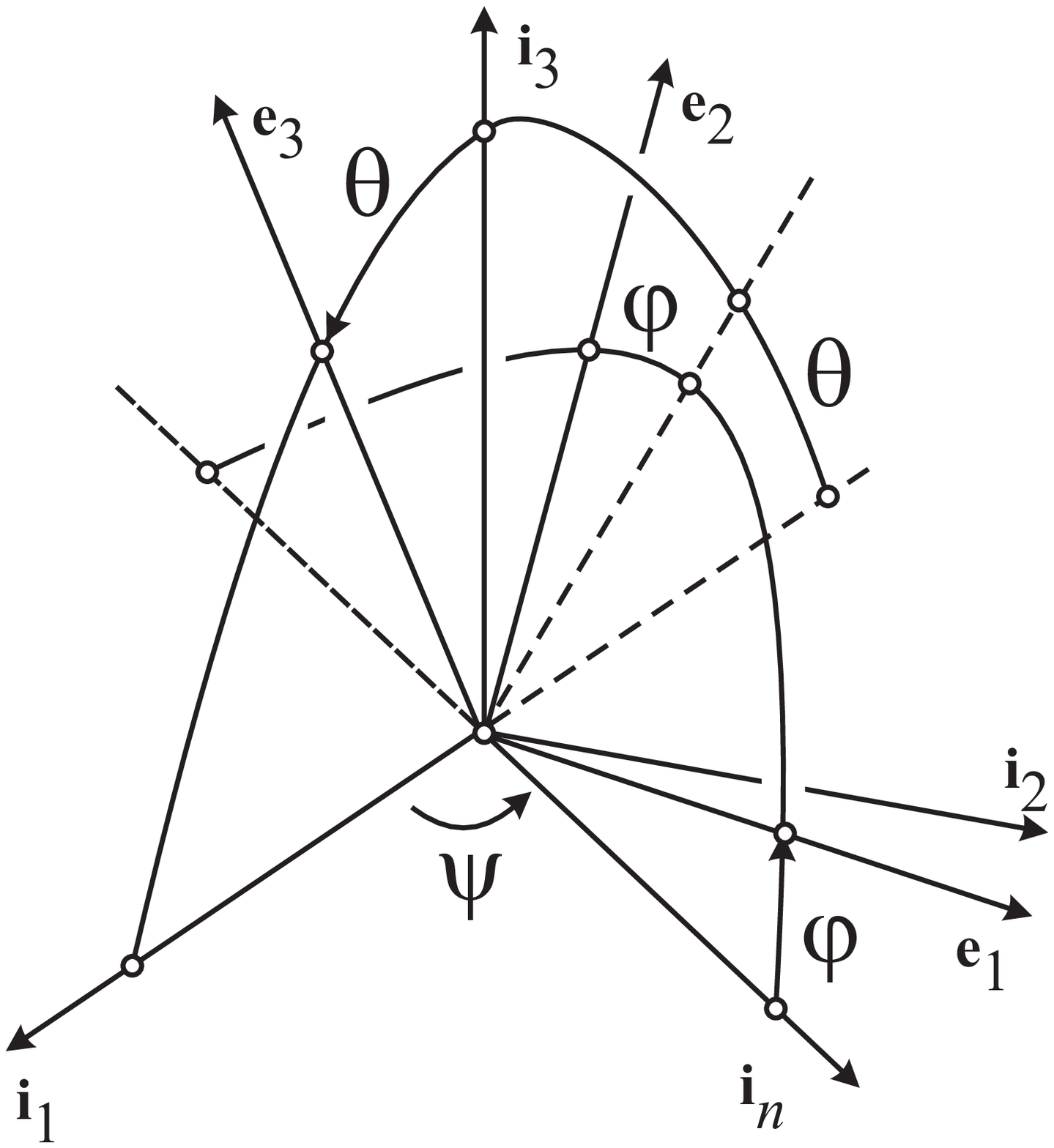} & \includegraphics[width=\qq\textwidth, keepaspectratio = true]{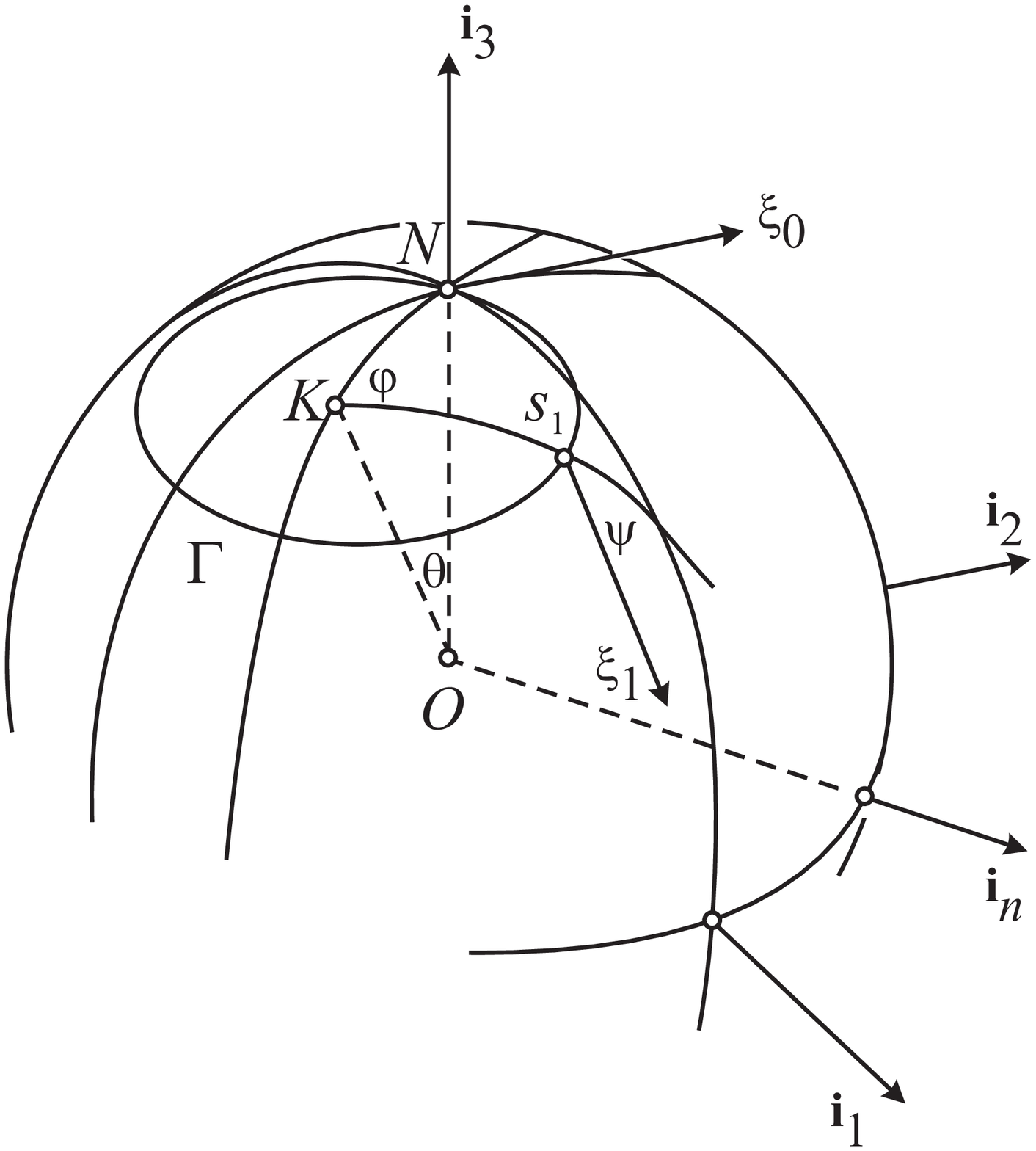}\\
Figure 1 & Figure 2\\
\includegraphics[width=\qq\textwidth, keepaspectratio = true]{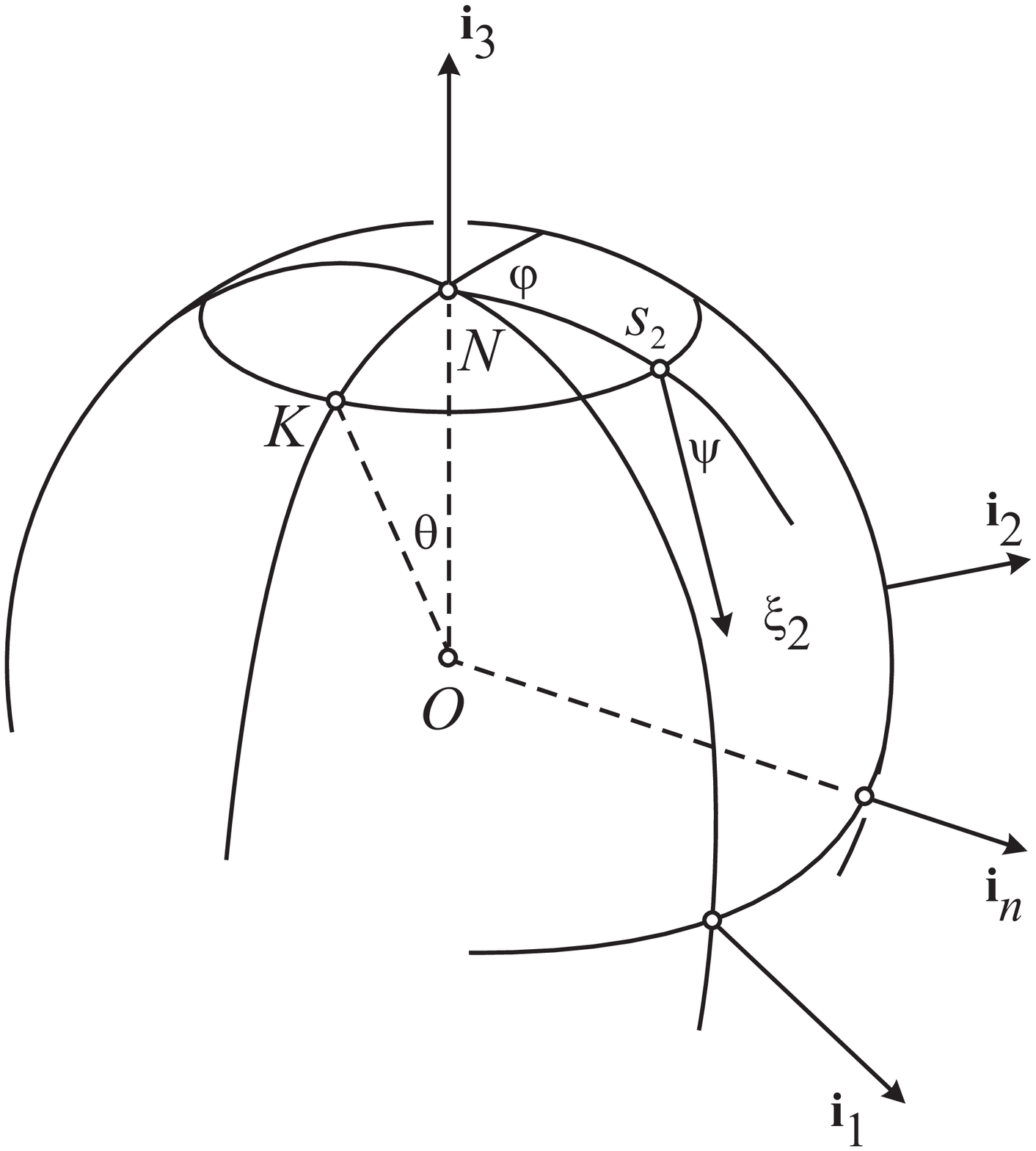} & \includegraphics[width=\qq\textwidth, keepaspectratio = true]{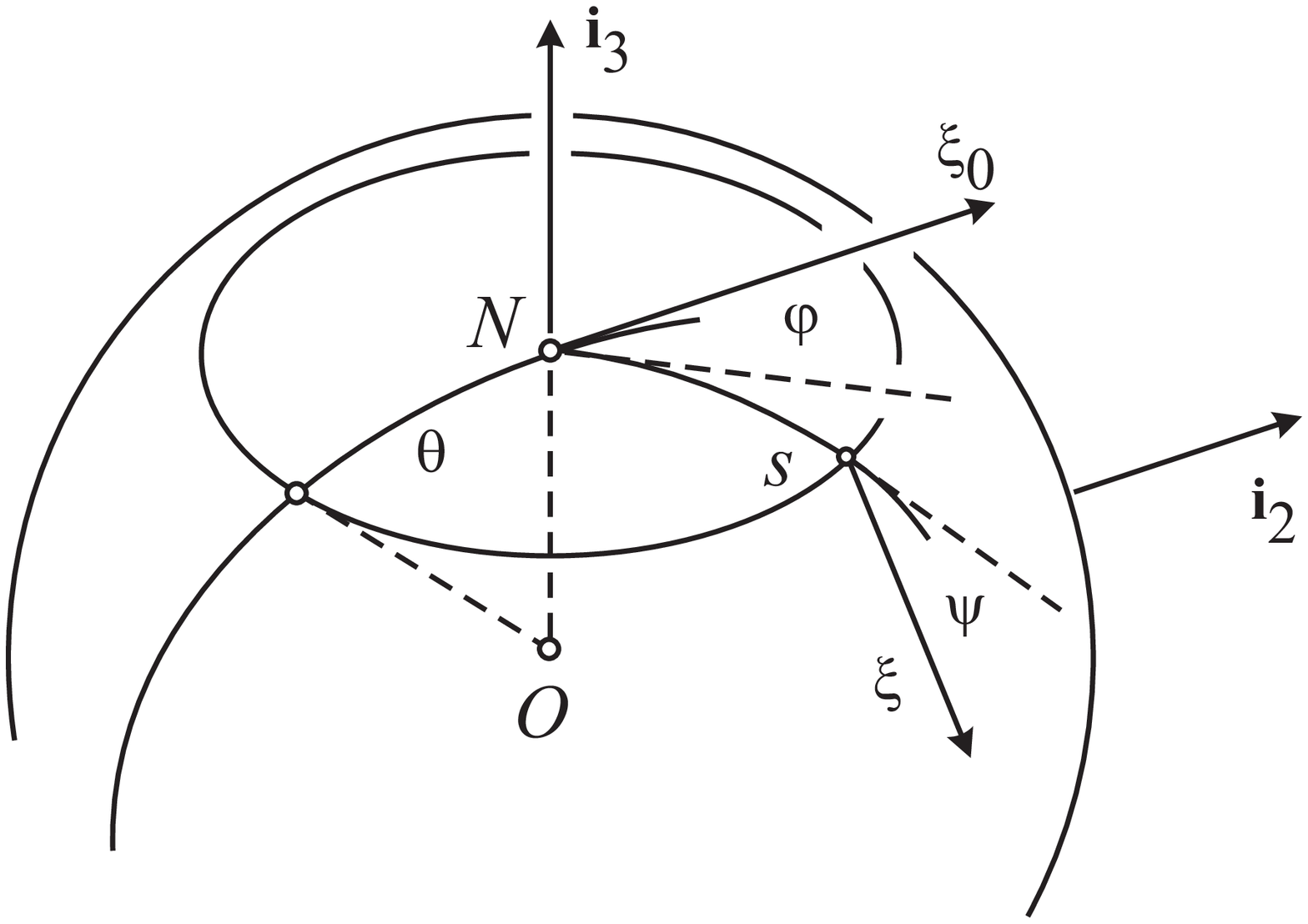}\\
Figure 3 & Figure 4
\end{tabular}
\end{table}

Let us show that, under this identification of $M$ with $T^1S^2$, the angles of proper rotation $\vp$ and nutation $\theta$ correspond to the spherical coordinates on $S^2$. Denote $Q=\{x\in \bR^3:\|x\|\ls \pi\}$. We say that $x\in Q$ is a \textbf{defining vector} of an element $g\in SO(3)$ if $x$ lies on the axis of rotation $g$, $\|x\|$ is the angle of rotation, and the direction of rotation is connected with the direction of $x$ by the right-handed screw rule. The element of $SO(3)$ with the defining vector $x$ will be denoted by $v_x$. If the position $e$ of the body is characterized by the Euler angles $\vp,\psi,\theta$, then the corresponding element $g_e\in SO(3)$ is a composition $g_e=v_{x_3}\circ v_{x_2}\circ v_{x_1}$, where $x_1=-\vp \be_3$, $x_2=-\theta \bi_{n}$, $x_3=-\psi \bi_3$ (see Fig.~1). The sequence of these rotations is shown in Fig.~2\,--\,4. In Fig.~2, the point $K$ is the intersection with the sphere of $O\be_3$, the circle $\Gamma$ is the cross-section of the sphere by the plane orthogonal to $\be_3$ and containing $N$. The element $(s_1,\xi_1)\in T^1 S^2$ is the image of $z_0$ under the rotation $v_{x_1}$.  The rotation $v_{x_2}$ takes $K$ to the north pole $N$ and the circle $\Gamma$ to the cross-section of the sphere orthogonal to $\bi_3$ (see Fig.~3). Finally, $g_e(z_0)=(s,\xi)$ (see Fig.~4) and $\vp,\theta$ are the spherical coordinates of the point $s$. At the same time, the precession angle $\psi$ becomes the angle between the tangent vectors $\xi$ and $\prt/\prt\theta$.

We now investigate the action of the symmetry group on the manifold $T^1S^2$. If the position $e$ is obtained from $e'$ by the rotation $g\in SO(3)$, then $g_{e'}=g_e\circ g$. The equivalence class of $e$ with respect to the action of $G$ is represented in $T^1S^2$ as $g_e\circ G(z_0)$, which is the set of all unit tangent vectors to $S^2$ at the point $g_e(N)\in S^2$. Thus, the map $\pi:M\to S^2$ such that the pre-image of any point of $S^2$ is exactly the equivalence class of the $G$-action is defined without using any coordinates. Namely, $\pi=\tau_{S^2}$ is the projection to the base of the bundle. Obviously, the first two conditions of the existence of a principal bundle hold. To apply the described above procedure of reduction, we have to check the third condition (local triviality).

For any point $s\in S^2$ we can take for $U$ any neighborhood of $s$ not containing poles, and for $F_U$ the map $(\vp,\theta,\psi)\mapsto (\vp,\theta)$, where the Euler angles $(\vp,\theta,\psi)$ (see Fig.~1) are the local coordinates on $M$ except for the pre-images of the poles.

Let us construct the map $F_{U'}$ in some neighborhood $U'$ of the point $N\in S^2$. Let $g_0\in SO(3)$ be an arbitrary element with the only condition that $g_0$ does not take $N$ to itself or to the south pole. Let $U$ be a neighborhood of $g_e(N)$ for which $F_U$ exists. Put $U'=g_0^{-1}(U)$ and $F_{U'}=F_U\circ g_0$. We need to show that $F_{U'}$ commutes with the transformations of the symmetry group $G$. Denote by $z g$ ($z\in T^1S^2,g\in G$) the action of the symmetry group $G$ on the manifold $T^1 S^2$. This notation is used to distinguish it from $g(s), s \in S^2$ and $g(z), z \in T^1 S^2$; the latter means that the element $g\in SO(3)$ is applied to a point $s$ or to a vector $z$ with an origin on $S^2$. Let us show that for all $z\in T^1S^2$, $h\in SO(3)$, $g\in G$
\begin{equation}\label{eq29}
  h(z g)=(h(z))g.
\end{equation}
Indeed, let $g_z$ be an element of $SO(3)$ such that $z=g_z(z_0)$. Then $g_{h(z)}=h\circ g_z$ and $zg=g_z\circ g^{-1}(z_0)$. Hence, $h(zg)=h\circ g_z\circ g^{-1}(z_0)$ and $((h(z))g=g_{h(z)}\circ g^{-1}(z_0)=h\circ g_z\circ g^{-1}(z_0)$. This proves \eqref{eq29}. Now since $F_U$ and, according to \eqref{eq29}, $g_0$ commute with any $g\in G$, this is also true for $F_{U'}$. For the south pole a similar trivialization is built analogously.

Thus, we proved the almost obvious fact that $T^1 S^2$ a total space of a principal bundle with the base $S^2$ and the structure group $G$.

\begin{theorem}[G.V.\,Kolosov]\label{theo4}
If in the problem of the motion of a rigid body in an axially symmetric force field the momentum constant is zero, then the reduced system with the energy constant equal to $h$ is isomorphic to the problem of the motion of a particle over the surface of the ellipsoid $E^2:Ax^2+B y^2+C z^2=1$ $(A,B,C$ are the principle moments of inertia$)$ in the field with the potential
\begin{equation*}
  \frac{ABC (V-h)}{A^2 x^2+B^2 y^2+C^2 z^2}
\end{equation*}
and the energy constant zero. Here $V=\bb{V_0}\circ F^{-1}$ and $F: S^2\to E^2$ is a diffeomorphism. In particular, the motion of a free body $(V_0=0)$ reduces to the geodesic flow on $E^2$ in the metric $d\Sigma=\sqrt{h ABC}(A^2 x^2+B^2 y^2+C^2 z^2)^{-1/2}d\sigma$, where $d\sigma$ is the real metric of the ellipsoid.
\end{theorem}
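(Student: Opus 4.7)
The plan is to combine Theorem~\ref{theo1}, which since $\mf=0$ provides a global Lagrangian $\mL=\bb{L}$ on $TS^2$, with the classical Maupertuis--Jacobi principle that identifies trajectories of a natural Lagrangian system at a fixed energy level with reparametrized geodesics of the Jacobi metric $(h-V)g$. I would use Euler angles $(\vp,\theta,\psi)$ as the special coordinates of Section~1: as established in the preceding discussion, $(\vp,\theta)$ are spherical coordinates on $S^2$ (so $n=2$) and $\psi$ is the single group coordinate ($k=0$, $\ell=1$). Writing $K=\tfrac12(A\omega_1^2+B\omega_2^2+C\omega_3^2)$ in these variables and setting $\mf=0$, that is, $A\omega_1\gamma_1+B\omega_2\gamma_2+C\omega_3\gamma_3=0$ with $\gamma=(\sin\theta\sin\vp,\sin\theta\cos\vp,\cos\theta)$, amounts to one linear equation for $\dot\psi$ of the shape~\eqref{eq04}. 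Substituting its solution back into $K$ yields a positive definite quadratic form $\tfrac12\,d\rho^2$ on $TS^2$, so $\mL=\tfrac12\,d\rho^2-\bb{V_0}$ with the metric $d\rho^2$ explicitly computable.

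The computational core is to introduce the linear diffeomorphism
$$
F:S^2\to E^2,\qquad F(\gamma_1,\gamma_2,\gamma_3)=\bigl(\gamma_1/\sqrt{A},\gamma_2/\sqrt{B},\gamma_3/\sqrt{C}\bigr),
$$
which is the restriction of an obvious linear isomorphism of $\bR^3$, and to verify the single identity
$$
F^*d\sigma^2=\frac{\Gamma}{ABC}\,d\rho^2,\qquad \Gamma=A\gamma_1^2+B\gamma_2^2+C\gamma_3^2=(A^2x^2+B^2y^2+C^2z^2)\circ F.
$$
Both sides are straightforward to expand in $(\vp,\theta)$: the left-hand side from the parametrization $F\circ(\text{spherical})$ of $E^2$, the right-hand side from the substitution of step one into $K$. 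The two expressions share the same numerator and differ only in the denominator ($\Gamma$ versus $ABC$), which gives the claimed conformal relation.

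The Maupertuis--Jacobi principle then finishes the proof. On $S^2$ the trajectories of $\mL$ at energy $h$ are, after a time reparametrization, geodesics of $(h-\bb{V_0})\,d\rho^2$. On $E^2$, the natural system with metric $d\sigma^2$ and potential $U=\frac{ABC(V-h)}{A^2x^2+B^2y^2+C^2z^2}$ at energy $0$ produces the Jacobi metric
$$
-U\,d\sigma^2=\frac{ABC(h-V)}{A^2x^2+B^2y^2+C^2z^2}\,d\sigma^2.
$$
Pulling the latter back by $F$ and using $V\circ F=\bb{V_0}$ together with the identity of the previous paragraph gives exactly $(h-\bb{V_0})\,d\rho^2$, so the two Jacobi metrics coincide under $F$ and the trajectories on the two fixed-energy levels are matched orbit by orbit. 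The free-body case $V_0\equiv 0$ is then immediate: the Jacobi metric becomes $d\Sigma^2=\frac{hABC}{A^2x^2+B^2y^2+C^2z^2}\,d\sigma^2$ and the reduced motion is the geodesic flow of $(E^2,d\Sigma)$.

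The main obstacle is the metric identity in the middle paragraph: conceptually it is a single conformal relation between two quadratic forms on $S^2$, but the verification is delicate because the conventions (sign of the $\omega_i$ in Euler angles, orientation of $F$, the appearance of $\sqrt{A},\sqrt{B},\sqrt{C}$ rather than $A,B,C$ in $F$) must be pinned down beforehand, and one has to witness a nontrivial cancellation of cross terms that leaves the clean factor $\Gamma/ABC$. A secondary subtlety is that the ``isomorphism'' in the theorem is of the phase portraits on the fixed energy surfaces (orbits as unparametrized curves), not of the parametrized flows --- which is precisely the level of identification that Maupertuis provides.
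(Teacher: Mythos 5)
Your treatment of the first statement is essentially the paper's argument with a repackaged last step. The setup (Theorem~\ref{theo1}, Euler angles, solving $J=0$ for $\dot\psi$, the map $F$ with the $1/\sqrt{A},1/\sqrt{B},1/\sqrt{C}$ scalings) and the computational core are identical: your identity $F^*d\sigma^2=\frac{\Gamma}{ABC}\,d\rho^2$ is exactly the paper's computation of $\tilde\mL=\mL\circ TF^{-1}$, so that part is fine. Where you diverge is the conversion between the energy-$h$ level on $S^2$ and the energy-$0$ level on $E^2$: the paper proves a direct time-substitution lemma (for a functional $\int[A(u)T(\dot u)-U(u)]\,dt$ with constraint $\Phi(u)=0$, the change $dt=A(u)\,d\tau$ carries extremals of energy $h$ to extremals of $\int[T(u')-A(u)(U(u)-h)]\,d\tau$ of energy $0$), whereas you apply the Maupertuis--Jacobi principle twice and match the two Jacobi metrics. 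Your route is correct, but note that it is slightly weaker: the Jacobi metrics $(h-\bb{V_0})\,d\rho^2$ and $-U\,d\sigma^2$ degenerate on the set $\{V=h\}$, so the identification you get is only clean on the interior of the Hill region, while the paper's conformal time factor $A(u)=ABC/(A^2x^2+B^2y^2+C^2z^2)$ is strictly positive everywhere, so its isomorphism of the two fixed-energy systems requires no restriction on $h$ relative to $V$ and also handles trajectories reaching the zero-velocity set.

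There is, however, a genuine gap in your handling of the second statement. You treat the free-body case as ``immediate'' by setting $V_0\equiv 0$ in the first statement, but that only covers free motions whose momentum about the \emph{pre-fixed} symmetry axis vanishes. The paper's claim (and its proof) concerns an arbitrary motion of the free body, with arbitrary angular momentum $\mf\in\ma^*$ for the full symmetry group $SO(3)$: the essential extra step is to choose, for the given trajectory, a one-parameter subgroup $G\subset SO(3)$ generated by some $X_0$ with $\mf(X_0)=0$, and to check (via $J(w_m)(Y)=K_m(w_m,\aq_m(Y))$) that the momentum integral of the system with symmetry group $G$ then vanishes on that trajectory; only after this adapted choice of axis can the first statement with $V_0=0$ be invoked, followed by the Maupertuis step (legitimate here because $-hABC/(A^2x^2+B^2y^2+C^2z^2)<0$ everywhere). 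Without this choice-of-axis argument the reduction of the free-body problem to the geodesic flow on the ellipsoid is not established in the generality the theorem intends.
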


\begin{proof}
By Theorem~\ref{theo1} the reduced problem is a Lagrangian system on the sphere $S^2$ with the Lagrangian $\mL=\bb{L}$, where $L$ is the Lagrangian of the initial mechanical system. For the local coordinates on the manifold $M$ we take the Euler angles $\vp,\theta,\psi$. Then $\vp$ and $\theta$ are the spherical coordinates on $S^2$. In these coordinates, $\mL$ has the form
$$
\begin{array}{l}
\displaystyle  \mL=\frac{1}{2}\frac{Q \dot \theta^2+R \dot \vp^2-2(A-B)C \dot \vp \dot \theta \sin\vp \cos\vp\sin\theta\cos\theta}{(A\sin^2\vp+B\cos^2\vp)\sin^2\theta+C\cos^2\theta}-\bb{V_0}(\vp,\theta),
\end{array}
$$
where
$$
\begin{array}{l}
  Q=(B\cos^2\vp+A\sin^2\vp)C \cos^2\theta +AB \sin^2\theta, \\
  R=(A\cos^2\vp+B\sin^2\vp)C \sin^2\theta.
\end{array}
$$
Trajectories of the correspondent system are extremals of the functional
\begin{equation}\label{eq30}
  \int_{t_0}^{t_1}\mL dt
\end{equation}
in the class of curves lying on $S^2$ and satisfying the conditions $s(t_0)=s_0, s(t_1)=s_1$ ($s_0, s_1 \in S^2$). Introduce the following diffeomorphism $F:S^2\to E^2$
$$
x=\frac{1}{\sqrt{A}}\sin\theta\sin\vp, \quad y=\frac{1}{\sqrt{B}}\sin\theta\cos\vp, \quad z=\frac{1}{\sqrt{C}}\cos\theta.
$$
Extremals of the functional \eqref{eq30} under the map $F$ go to trajectories of the system on $E^2$ with the Lagrangian
$$
\tilde \mL = \mL\circ T F^{-1}=\frac{1}{2}ABC(\dot x^2+\dot y^2+\dot z^2)(A^2 x^2+B^2 y^2+C^2 z^2)^{-1}-V(x,y,z).
$$
In turn, these trajectories are extremals of the functional
\begin{equation}\label{eq31}
  \int_{t_0}^{t_1}\tilde \mL dt
\end{equation}
in the class of curves $u(t)\in E^2$ satisfying the conditions $u(t_0)=F(s_0), u(t_1)=F(s_1)$.

Let us fulfill the time change in the extremal problem of the functional
\begin{equation}\label{eq32}
  \int_{t_0}^{t_1} \left[A(u)T(\dot u)-U(u)\right]dt,
\end{equation}
where $T$ is a quadratic form of the components of the vector $\dot u$ with constant coefficients. The restricting relation has the form $\Phi(u)=0$, and the change is $dt=A(u)d\tau$. Let $\ou (t)$ be an extremal of the functional \eqref{eq32}. Then the Euler equations hold
\begin{equation}\label{eq33}
  \frac{d}{dt}\left[A(\ou)\frac{\prt T}{\prt \dot u}(\dot{\ou})\right]-\frac{\prt A}{\prt u}(\ou) T(\dot{\ou})+\frac{\prt U}{\prt u}(\ou)=\lambda(t)\frac{\prt \Phi}{\prt u}(\ou).
\end{equation}
The energy conservation law gives
$$
A(\ou)T(\dot \ou)+U(\ou)\equiv h.
$$
But
$$
T(\dot u)=A^{-2}(u)T(u'),
$$
where $u'=du/d\tau$, i.e.,
\begin{equation}\label{eq34}
  \frac{T(\ou')}{A(\ou)}\equiv h-U(\ou).
\end{equation}
Rewrite equation \eqref{eq33} in the following form
\begin{equation*}
\frac{1}{A(\ou)}  \frac{d}{d\tau}
\left[\frac{\prt T}{\prt u'}(\ou')\right]
-\frac{\prt A}{\prt u}(\ou) \frac{T(\ou')}{A^2(\ou)}+\frac{\prt U}{\prt u}(\ou)=\lambda(t)\frac{\prt \Phi}{\prt u}(\ou).
\end{equation*}
Substituting \eqref{eq34} we get
$$
\frac{d}{d\tau}
\left[\frac{\prt T}{\prt u'}(\ou')\right]
-\frac{\prt}{\prt u}\left[ T(\ou')-A(\ou)\left(U(\ou)-h\right)\right]=\tilde\lambda(t)\frac{\prt \Phi}{\prt u}(\ou),
$$
where $\tilde \lambda(t)=A(\ou(t))\lambda(t)$. This means that $\ou(t(\tau))$ is an extremal of the functional
$$
\int_{\tau_0}^{\tau_1}\left[T(u')-A(u)\left(U(u)-h\right)\right]d\tau
$$
with the energy constant $T(u')+A(u)\left(U(u)-h\right)\equiv 0$.

Applying this change of time to the functional \eqref{eq31} with $$
\displaystyle A(u)=\frac{ABC}{A^2x^2+B^2y^2+C^2z^2}, \quad T(\dot u)=\frac{1}{2}(\dot x^2+\dot y^2+\dot z^2), \quad U(u)=V(x,y,z),
$$
we obtain the first statement of the theorem.

To prove the second part, note that in the case of a free rigid body the corresponding mechanical system admits, as a symmetry group, the whole group $SO(3)$. Let $\ma =\mathfrak{so}(3)$ be the Lie algebra of $SO(3)$. The momentum integral is $j:M\to \ma^*$. Suppose that on a given trajectory $j=\mf$. Let us show that it is possible to choose a subgroup $G\subset SO(3)$ of rotations about some axis in space in such a way that on this trajectory the momentum integral of the mechanical system with symmetry $(M,K,V_0\equiv 0,G)$ equals zero. There exists $X_0\in \ma$ such that $\mf(X_0)=0$. For $G$, we choose a one-parameter subgroup of $SO(3)$ generated by $X_0$. Obviously, $G$ is the group of rotations about some axis fixed in space due to the uniqueness of the one-parameter subgroup with a given generator. Let $w_m\in TM$ and $j(w_m)=\mf$. Then (see \cite{Smale}, Proposition 4.7) $0=\mf(X_0)=K_m(w_m,\aq_m(X_0))$. The Lie algebra $\mG$ of $G$ is $\mG=\{\gamma X_0: \gamma\in \bR\} \subset \ma$ and for all $Y\in \mG$ we have $J(w_m)(Y)=K_m(w_m,\aq_m(Y))=\gamma K_m(w_m,\aq_m(X_0))=0$.

Thus, by choosing the appropriate axes in space and the corresponding symmetry group we can assure that on the investigated trajectories $J=0$. Applying the first statement of the theorem, we obtain the motion of a particle on the ellipsoid in the field with the potential $(-hABC)/(A^2x^2+B^2y^2+C^2z^2)$ with zero energy constant. Now, since the condition
$$
\frac{-hABC}{A^2x^2+B^2y^2+C^2z^2} <0
$$
holds everywhere, the proof of the theorem is completed by applying the Maupertuis principle
(see e.g. \cite{Arnold}).
\end{proof}

\begin{theorem}\label{theo5}
Consider the problem of the motion of a rigid body in an axially symmetric force field. For the zero momentum constant and any energy constant satisfying the condition $h > \max V_0$ there exist at least three motions such that each of them is periodic with respect to some frame of reference rotating with the constant angular velocity about the force field symmetry axis.
\end{theorem}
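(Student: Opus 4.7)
The strategy is to reduce the problem to the existence of closed geodesics on a two-sphere and then invoke the Lusternik--Schnirelmann theorem (reference \cite{Lust}).

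First, I would apply Theorem~\ref{theo4}. The hypothesis $\mf=0$ is exactly what Theorem~\ref{theo4} needs; with the energy constant fixed at $h$, the reduced system is isomorphic to the motion of a point on the ellipsoid $E^2$ with potential $U(u)=ABC(V-h)/(A^2x^2+B^2y^2+C^2z^2)$ and zero total energy. The crucial observation is that the hypothesis $h>\max V_0$ implies $V-h<0$ uniformly on $E^2$, so $U<0$ everywhere. Consequently, the Maupertuis principle (exactly as it was used at the end of the proof of Theorem~\ref{theo4}) converts the extremals of the action with zero energy into geodesics of a genuine Riemannian metric $d\widetilde\Sigma$ on $E^2$ of the form $d\widetilde\Sigma=\sqrt{ABC(h-V)}\,(A^2x^2+B^2y^2+C^2z^2)^{-1/2}d\sigma$. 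After a time change, closed geodesics of $(E^2,d\widetilde\Sigma)$ are in bijection with periodic trajectories of the reduced system.

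Next, I would invoke the Lusternik--Schnirelmann theorem, which asserts that on any Riemannian two-sphere there exist at least three geometrically distinct closed geodesics; since $F:S^2\to E^2$ is a diffeomorphism, this applies to $(E^2,d\widetilde\Sigma)$. Pulling back along $F$ and undoing the Maupertuis time change, we obtain at least three periodic trajectories of the reduced system $X_{\bb{E}}$ on $TS^2$ at the energy level $\bb{E}=h$.

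Finally, I would lift these periodic reduced trajectories to $M=T^1S^2$ using Remark~23. In the special coordinates $(\theta,\vp,\psi)$ associated with a standard chart, the symmetry group $G\cong S^1$ acts by translation in $\psi$, and the corresponding coordinate on $M$ evolves along a lifted trajectory as $\psi(t)=\psi_0+\int_0^t h(q(t),\dot q(t),0,\eta)\,dt$ (with $\xi$ absent because the symmetry is one-dimensional). If $(\theta(t),\vp(t))$ is periodic with period $T$, then after one period $\psi$ has advanced by a constant $\Delta\psi=\int_0^T h\,dt$ while the base point has returned to itself. Passing to a reference frame that rotates about the field's symmetry axis with the constant angular velocity $\Delta\psi/T$ makes the full motion on $M$ genuinely periodic. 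Applying this construction to each of the three closed geodesics yields the three required motions.

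The main obstacle is the invocation of Lusternik--Schnirelmann: one must verify that the three closed geodesics produced are geometrically distinct (not just critical points of the length functional counted with multiplicity on a covering space of the free loop space), which is the content of \cite{Lust}; everything else is essentially bookkeeping built out of Theorem~\ref{theo4}, the Maupertuis principle, and Remark~23.
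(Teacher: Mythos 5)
Your proposal is correct and follows essentially the same route as the paper: apply Theorem~\ref{theo4} together with the Maupertuis principle (valid because $h>\max V_0$ makes the potential on $E^2$ strictly negative) to get a genuine Riemannian metric on the ellipsoid, invoke the Lusternik--Schnirelmann theorem from \cite{Lust} for three closed geodesics, and lift the resulting periodic reduced trajectories via Remark~23, observing that $\dot\psi(t)$ is $T$-periodic so the motion becomes periodic in a frame rotating with angular velocity $\frac{1}{T}\int_0^T\dot\psi\,dt$ about the symmetry axis. The only differences are cosmetic (the paper writes the lifted angle as $\psi=\psi_0+\Lambda t+\Psi(t)$ with $\Psi$ periodic, and uses $f$ rather than your notationally overloaded $h$ for the solved function $\dot\psi$), so no changes are needed.
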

\begin{proof}
Applying to such a problem the statement of Theorem~\ref{theo4} we obtain that on the level $h>\max V)$ it is isomorphic to the geodesic flow on the ellipsoid $E^2$ in the metric
$d\Sigma_1=\sqrt{ABC(h-V)}(A^2 x^2+B^2 y^2+C^2 z^2)^{-1/2}d\sigma$, where $V:E^2\to \bR$ is defined by $V_0$ in Theorem~\ref{theo4} and $d\sigma$ is the metric on $E^2$ induced by the scalar product in $\bR^3$. According to the results of the work \cite{Lust}, this flow has at least three closed geodesics. These geodesics correspond to periodic solutions of the reduced system. Let $\vp=\vp(t)$, $\theta=\theta(t)$ be such a solution with a period $T$. Then by Remark~23, the corresponding solution of the initial problem is
\begin{equation}\label{eq35}
  \vp=\vp(t), \quad \theta=\theta(t), \quad \psi=\psi_0+\int_0^t f(\vp(\tau), \theta(\tau), \dot \vp(\tau), \dot \theta(\tau))d\tau,
\end{equation}
where $\dot \psi=f(\vp, \theta, \dot \vp, \dot \theta)$ is found from the equation $J=0$. Explicitly,
$$
\dot \psi =-\frac{(A-B)\dot \theta \sin\vp\cos\vp\sin\theta+C \dot \vp \cos\theta}{(A\sin^2\vp+B\cos^2\vp)\sin^2\theta+C\cos^2\theta}.
$$
Hence, $\dot \psi(t)$ is a periodic function with the period $T$. Then $\psi=\psi_0+\Lambda t+\Psi(t)$, where
$$
\Lambda=\frac{1}{T}\int_0^T\dot \psi(t)dt
$$
and $\Psi(t)$ is a $T$-periodic function. Consider a coordinate frame in the inertial space rotating with the angular velocity $\Lambda$ about the symmetry axis. With respect to this frame the trajectory \eqref{eq35} has the form $\vp=\vp(t)$, $\theta=\theta(t)$, $\psi=\psi_0+\Psi(t)$ and is $T$-periodic. The theorem is proved.
\end{proof}

\end{document}